\def \tilde{\widetilde}
\newcommand{\st}[1]{\ensuremath{^{\scriptstyle \textrm{#1}}}}
\newtheorem{theorem}{Theorem}
\newtheorem*{theorem*}{Theorem}
\newtheorem{lemma}{Lemma}
\newtheorem{proposition}{Proposition}
\newtheorem*{proposition*}{Proposition}
\newtheorem{corollary}{Corollary}
\theoremstyle{definition}
\newtheorem*{definition*}{Definition}
\newtheorem*{comments*}{Comments}
\newtheorem*{example*}{Example}
\theoremstyle{remark}
\newtheorem{remark}{Remark}
\newtheorem*{remarks*}{Remarks}
\newcommand{\alphaparenlist}{
  \renewcommand{\theenumi}{\alph{enumi}}%
  \renewcommand{\labelenumi}{(\theenumi)}%
}
\newcommand{\romanparenlistii}{
  \renewcommand{\theenumii}{\roman{enumii}}%
  \renewcommand{\labelenumii}{(\theenumii)}%
}
\def\@maketitle{\newpage
 \null
 \vskip 2em
 \begin{center}%
  {\Large\bf \@title \par}%
  \vskip 1.5em
  {\normalsize
   \lineskip .5em
   \begin{tabular}[t]{c}\@author
   \end{tabular}\par}%
  \vskip 2em

 \end{center}%
 \par
 \vskip 2.5em}
\begin{document}






\title{Irreducible Continuous Representations of the Simple Linearly Compact n-Lie Superalgebra  of type $W$  }

\author{Carina Boyallian and Vanesa Meinardi\thanks{%
     Ciem - FAMAF, Universidad Nacional de C\'ordoba - (5000) C\'ordoba,
Argentina
\newline $<$boyallia@mate.uncor.edu - meinardi@mate.uncor.edu$>$.}}

 \maketitle

\begin{abstract}
In the present paper we classify all irreducible continuous representations of
the  simple linearly compact n-Lie superalgebra of type $W.$ The
{cla\-ssi\-fi\-ca\-tion} is based on a bijective correspondence
between  the continuous  representations of the n-Lie algebras $W^n$ and continuous
representations of the Lie
algebra of Cartan type
   $W_{n-1},$    on which some two-sided ideal acts trivially.
\end{abstract}

\vfill \pagebreak

\section{Introduction}
 \noindent In 1985 Filippov \cite{F} introduced a generalization of a Lie
algebra, which he called an $n$-Lie algebra. The Lie bracket  is
taken between $n$ elements of the algebra instead of two. This new
bracket is $n$-linear, anti-symmetric and satisfies
a generalization of the Jacobi identity.\\
 In \cite{F} and several subsequent papers,
\cite{F1},\cite{K}, \cite{K1},\cite{L} a structure theory of finite
dimensional $n$-Lie algebras over a field $\mathbb{F}$ of
characteristic $0$ was developed. In \cite{L},  W. Ling proved that
for every $n\geq 3$ there is, up to isomorphism only one finite dimensional simple
$n$-Lie algebra, namely $\mathbb{C}^{n+1}$ where  the $n$-ary operation is
given by the generalized vector product, namely, if  $e_1,\cdots,
e_{n+1}$ is the standard basis of $\mathbb{C}^{n+1},$  the  n-ary bracket
is given by
$$[e_1,\cdots, \hat{e_i},\cdots e_{n+1}]=(-1)^{n+i-1}\, e_i,$$ where
$i$ ranges from $1$ to $n+1$ and the hat means that $e_i$ does not
appear in the bracket.

 A. Dzhumadildaev studied in \cite{D1} the
finite dimensional irreducible representations of the simple $n$-Lie
algebra $\mathbb{C}^{n+1}$. D.Balibanu and J. van de Leur in
\cite{BL} classified both, finite and infinite-dimensional
irreducible highest weight representations of this algebra.
 Another examples of  $n$-Lie algebras appeared earlier in
Nambu's generalization of Hamiltonian dynamics \cite{N}. A more
recent important example of an $n$-Lie algebra structure on
$C^{\infty}(M),$ where $M$ is a finite-dimensional manifold, was given by Dzhumadildaev in \cite{D}, and it is associated to $n-1$ commuting vector fields $D_1, \cdots, D_{n-1}$ on $M.$ More precisely, it is the
space $C^{\infty}(M)$ of $C^{\infty}$-functions on
 $M,$ endowed with a n-ary bracket,
associated to $n-1$ commuting vector fields $D_1, \cdots, D_{n-1}$ on
$M$:
\begin{equation}
  \label{eq:0.3}
  [f_1 ,\ldots ,f_n] = \hbox{det} \left(
    \begin{array}{ccc}
f_1 & \ldots & f_n\\
D_1 (f_1) & \ldots & D_1 (f_n)\\
\hdotsfor{3}\\
D_{n-1} (f_1) & \ldots & D_{n-1}(f_n)
\end{array} \right) \, .
\end{equation}

 A linearly compact algebra is a topological algebra, whose underlying vector space is linearly compact, namely is a topological product of finite-dimensional vector spaces, endowed with discrete topology (and it is assumed that the algebra product is continuous in this topology).
  In $2010$, N. Cantirini and V. Kac,  (\cite{CK}),  classified simple linearly compact $n$-Lie superalgebras with $n>2$
over a field $\mathbb{F}$ of characteristic 0.  The list consists in
four examples, one of them being $n+1$-dimensional vector
product $n$-Lie algebra, and the remaining three are
infinite-dimensional $n$-Lie algebras. More precisely,
\begin{theorem} \rm{\label{cantarini-kac teo}}
  \label{th1}
\alphaparenlist
 \cite{CK} \begin{enumerate}
  \item Any simple linearly compact $n$-Lie
    algebra with $n>2$, over an algebraically closed
    field~$\mathbb{F}$ of characteristic 0, is isomorphic to one of the
following four examples:

\romanparenlistii
    \begin{enumerate}
    \item 
the $n+1$-dimensional vector product $n$-Lie algebra $\mathbb{C}^{n+1}$;

\item 
the $n$-Lie algebra, denoted by $S^n$, which is the linearly compact
vector space of formal power series $\mathbb{F} [[x_1, \ldots
,x_n]]$, endowed with the $n$-ary bracket
 \begin{equation*}
  \label{eq:0.2}
  [f_1,\ldots ,f_n] = \det \left(
 \begin{array}{ccc}
     D_1 (f_1) & \ldots & D_1 (f_n)\\
\hdotsfor{3}\\
D_{n} (f_1) & \ldots & D_{n}(f_n)
\end{array} \right) \, .
\end{equation*}where $D_i =
\frac{\partial}{\partial x_i}$;

\item 
the $n$-Lie algebra, denoted by $W^n$, which is the linearly compact
vector space of formal power series $\mathbb{F} [[x_1,\ldots
,x_{n-1}]]$, endowed with the $n$-ary bracket,
\begin{equation*}
  \label{eq:0.3}
  [f_1 ,\ldots ,f_n] = \hbox{det} \left(
    \begin{array}{ccc}
f_1 & \ldots & f_n\\
D_1 (f_1) & \ldots & D_1 (f_n)\\
\hdotsfor{3}\\
D_{n-1} (f_1) & \ldots & D_{n-1}(f_n)
\end{array} \right) \, .
\end{equation*} where $D_i =
\frac{\partial}{\partial x_i}$;

\item 
  the $n$-Lie algebra, denoted by $SW^n$, which is the  direct
  sum of $n-1$ copies of $\mathbb{F} [[x]]$, endowed with the following
  $n$-ary bracket, where $f^{\langle j \rangle}$ is an element of the
  $j$\st{th} copy and $f' = \frac{df}{dx}$:
\begin{align*}
  [f^{\langle j_1 \rangle}_1 , \ldots f^{\langle j_n \rangle}_n ] =0,
\hbox{\,\, unless \,\,} \{ j_1,\ldots ,j_n \}\supset \{ 1 ,\ldots , n-1 \},\\
  [f^{\langle 1 \rangle}_1 ,\ldots , f^{\langle k-1 \rangle}_{k-1},
    f^{\langle k \rangle}_k, f^{\langle k \rangle}_{k+1},
    f^{\langle k+1 \rangle}_{k+2}, \ldots ,
    f^{\langle n-1 \rangle}_n ]\\
    = (-1)^{k+n} (f_1 \ldots f_{k-1} (f'_kf_{k+1} - f'_{k+1}f_k)
       f_{k+2}\ldots f_n)^{\langle k \rangle}\, .
\end{align*}
    \end{enumerate}
\item 
There are no simple linearly compact $n$-Lie  superalgebras over
$\mathbb{F}$, which are not $n$-Lie algebras, if $n>2$.

\end{enumerate}

\end{theorem}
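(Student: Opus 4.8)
The plan is to reduce the classification of simple linearly compact $n$-Lie superalgebras to the known classification of simple linearly compact Lie superalgebras, by functorially attaching to each $n$-Lie superalgebra a $\ZZ$-graded Lie superalgebra. First I would introduce the \emph{inner derivations}: for $a_1,\dots,a_{n-1}\in\fg$ the operator $L_{a_1,\dots,a_{n-1}}\colon x\mapsto[a_1,\dots,a_{n-1},x]$ is, by the Filippov (fundamental) identity, a derivation of the $n$-ary bracket, and these span a linearly compact space $\mathrm{Inder}(\fg)$ acting continuously on $\fg$. The generalized Jacobi identity is exactly what guarantees that the commutator of two such operators is again a finite sum of operators of the same form, so $\mathrm{Inder}(\fg)$ is closed under the ordinary Lie bracket and is a genuine Lie superalgebra.

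Next I would build a $\ZZ$-graded Lie superalgebra $\L=\bigoplus_{j\ge-1}\L_j$ of depth one, with $\L_{-1}=\Pi\fg$ (parity reversed), with $\L_0=\mathrm{Inder}(\fg)$ acting on $\L_{-1}$ through the representation above, and with the positive part obtained by maximal transitive prolongation compatible with the $\L_0$-action on $\L_{-1}$. The central claim, which I would establish in both directions, is that $\fg$ is a simple linearly compact $n$-Lie superalgebra if and only if $\L$ is a simple linearly compact $\ZZ$-graded Lie superalgebra that is transitive, irreducible (i.e.\ $\L_{-1}$ is an irreducible $\L_0$-module), and of depth one. The forward direction uses simplicity of $\fg$ to exclude proper graded ideals of $\L$; the reverse direction recovers the $n$-ary bracket from the degree $-1$ piece together with the $\L_0$-action and transports simplicity of $\L$ back to $\fg$.

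With this dictionary in hand, I would invoke Kac's classification of simple linearly compact Lie superalgebras and run through those admitting a depth-one, transitive, irreducible consistent $\ZZ$-grading. The Cartan-type algebras then produce $W^n$ (corresponding to $W_{n-1}$, the case of interest in this paper) and the divergence-free analogue $S^n$; the finite-dimensional orthogonal case yields the vector-product algebra $\CC^{n+1}$; and a further degenerate series yields $SW^n$. The hypothesis $n>2$ together with the irreducibility of $\L_{-1}$ forces the grading into precisely the shape realized only by these four objects, and in particular rules out every genuinely super example, which is statement (b).

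The main obstacle, in my view, is making the correspondence $\fg\mapsto\L$ precise and proving it is a bijection on isomorphism classes: one must check that the prolongation behaves well in the linearly compact topology, that simplicity is transported faithfully in both directions with no spurious ideals appearing, and that the depth-one irreducible gradings on each algebra in Kac's list are enumerated exhaustively. The super case is the most delicate point: using the parity of $\L_{-1}$ and the compatibility of the $\ZZ$-grading with the $\ZZ_2$-grading, one has to verify that no odd part can survive when $n>2$, which is ultimately what forces the four examples to be ordinary $n$-Lie \emph{algebras} rather than proper superalgebras.
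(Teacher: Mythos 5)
Your proposal cannot be compared with a proof in this paper, because the paper contains none: the statement is Theorem \ref{th1}, quoted verbatim from Cantarini--Kac \cite{CK} and used as a black box. So the only meaningful benchmark is the argument in \cite{CK} itself. Your top-level plan --- pass from an $n$-Lie superalgebra $\mathfrak{g}$ to an associated $\mathbb{Z}$-graded Lie superalgebra and reduce to Kac's classification of simple linearly compact Lie superalgebras and their gradings --- is indeed their strategy, and your first step (the operators $L_{a_1,\dots,a_{n-1}}$ are derivations by the Filippov identity and close under commutator) is sound.

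The gap is in the dictionary itself, exactly at the point you flag as ``the main obstacle,'' and it is not a technical check but a missing idea. First, your graded algebra $\mathcal{L}$ (with $\mathcal{L}_{-1}=\mathfrak{g}$ parity-reversed, $\mathcal{L}_0=\mathrm{Inder}(\mathfrak{g})$, positive part a prolongation) \emph{forgets the $n$-ary bracket}: the bracket is equivalent to the surjection $\Lambda^{n-1}\mathfrak{g}\to\mathrm{Inder}(\mathfrak{g})$, $a_1\wedge\dots\wedge a_{n-1}\mapsto L_{a_1,\dots,a_{n-1}}$, which is extra data not encoded in the graded Lie superalgebra structure; even the integer $n$ cannot be read off from $\mathcal{L}$. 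Hence ``recovering the $n$-ary bracket from the degree $-1$ piece together with the $\mathcal{L}_0$-action'' is not possible, and the reverse direction of your claimed bijection is undefined. Second, your filter on Kac's list (depth one, transitive, $\mathcal{L}_{-1}$ irreducible over $\mathcal{L}_0$) is far too weak: every $W(m,n)$, $S(m,n)$, $H(m,n)$, $K(m,n)$ in its principal grading passes it, so ``running through the list'' would leave infinitely many candidates, not four. What Cantarini and Kac actually prove is a bijection between non-abelian $n$-Lie superalgebras and transitive $\mathbb{Z}$-graded Lie superalgebras of the very specific shape $\mathcal{L}=\bigoplus_{j=-1}^{n-1}\mathcal{L}_j$ with $\dim\mathcal{L}_{n-1}=1$, with $\mathcal{L}$ generated by $\mathcal{L}_{-1}$ together with $\mathcal{L}_{n-1}=\mathbb{F}\mu$, and with $[\mathcal{L}_j,\mathcal{L}_{n-1-j}]=0$ for all $j$: the bracket is encoded in the top element $\mu$ and recovered as $[a_1,\dots,a_n]=[\dots[[\mu,a_1],a_2],\dots,a_n]$, and $n$ is visible as the height of the grading. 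Note also that simplicity of $\mathfrak{g}$ does \emph{not} translate into simplicity of $\mathcal{L}$, contrary to your claim: for the vector product algebra $\mathbb{C}^{n+1}$ the associated $\mathcal{L}$ is the Hamiltonian superalgebra on $n+1$ odd variables (the Poisson superalgebra $\Lambda(n+1)/\mathbb{F}1$), which is not simple --- and in particular the relevant degree-zero data is genuinely super, not the orthogonal Lie algebra $so(n+1)$ as you suggest. With the height-$(n-1)$ correspondence in place, the enumeration of such gradings does close at the four examples and gives part (b) as a by-product; without it, your proof does not get off the ground.
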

In the present paper, we aim to classify all irreducible continuous representation of the simple linearly compact $n$-Lie
  algebra $W^n.$
In the same way that D.Balibanu and J. van de Leur did the
classification of irreducible modules in \cite{BL}, we reduced the
problem to find irreducible continuous representations of simple linearly
compact $n$-Lie (super) algebra $W^n$ to find irreducible
continuous representations
of its associated basic Lie algebra on
which some two-sided ideal acts trivially.
\noindent
The paper is organized as follow: In Section 2 we give the basic definitions and results related with $n$-lie algebras and state the relationship between representation of $n$- Lie  algebras and representations of its associated Lie algebra. In Section 3,  we introduce the simple linearly compact $n$- Lie algebra $W^n$,  we identify its associated Lie algebra with the Lie algebra of its inner derivations which is nothing but $W_{n-1}$, the Lie algebra of Cartan type $W$ and finally we relate representation of the $n$-Lie algebra $W^n$ with representations of $W_{n-1}$. In Section 4, we present some general results of the representation theory of $W_{n-1}$, prove some technical lemmas and we describe some generators of the two sided ideal that must act trivially in our representations. Finally in Section 5, we state and prove the main result of the paper.

\section{$n$-Lie algebras and $n$-Lie modules}

 We will give an introduction to $n$-Lie algebras and
$n$-Lie modules. We will also introduce some useful results over the
correspondence between representations of $n$-Lie algebra and
representations of its basic associated Lie algebra.

From now on, $\mathbb{F}$ is a field of characteristic zero. As mentioned before, we are interested in studying irreducible representations
of the linearly compact $n$-Lie superalgebra $W^n.$ N. Cantarini and
V. Kac stated in
\cite{CK} that there are no simple linearly compact $n$-Lie
superalgebras over $\mathbb{F},$ which are not $n$-Lie algebras. Then
we will use the representation theory of $n$-Lie algebras to give the
representation theory of simple linearly compact $n$-Lie
superalgebras.
Given an integer $n\geq 2$, an $n$-\textit{Lie algebra} $V$ is  a vector space over
a field $\mathbb{F}$, endowed with an $n$-ary anti-commutative product
$$
\begin{aligned}
\wedge^n V\,\,\,\,\, &\longrightarrow\,\,\,\, V\\
a_1\wedge\cdots\wedge a_n &\mapsto[a_1,\cdots,a_n],
\end{aligned}
$$
subject to the following Filippov-Jacobi identity:
\begin{equation}
\label{FJ}
\begin{aligned}
& [a_1, \dots , a_{n-1},[b_1, \dots , b_n]]=  [[a_1, \dots , a_{n-1}, b_1], b_2,\dots , b_n] +\\
&\,\,[b_1, [a_1, \dots , a_{n-1},b_2],b_3,\dots , b_n]+\dots +[b_1,\dots , b_{n-1}, [a_1, \dots , a_{n-1}, b_n]].
\end{aligned}
\end{equation}

A \textit{derivation} $D$ of
an $n$-Lie algebra $V$ is an endomorphism of the vector
space $V$  such that:

$$  D ([a_1 ,\cdots, a_n]) =
    [D (a_1) , a_2 , \cdots , a_n]
    + [a_1 , D(a_2) ,\cdots
    , a_n] + \cdots +
     [a_1 , \cdots , D (a_n)] .$$
As in the Lie algebra case ($n=2$), the meaning of the Filippov- Jacobi identity is that  all
endomorphisms $D_{a_1,\ldots ,a_{n-1}}$ of $V$ ($a_1,\ldots
a_{n-1} \in V$), defined by
\begin{displaymath}
D_{a_1,\ldots a_{n-1}}(a) = [a_1,\ldots ,a_{n-1},a]
\end{displaymath}
are derivations of $V$. These derivations are called \textit{inner}.

A subspace $W \subset V$ is called a $n$-\textit{Lie subalgebra} of the $n$-Lie algebra
$V$ if $[W,\cdots,W] \subset W.$ An $n$-Lie subalgebra $I \subset V$ of an $n$-Lie algebra is
called an \textit{ideal} if $[I,V,\cdots,V] \subset I.$ An $n$-Lie
algebra is called \textit{simple} if it has not proper ideal besides
$0.$

Let $V$ be an $n$-Lie algebra, $n\geq 3$.  We will
associate to $V$ a Lie algebra called \textit{the basic Lie
algebra}, following the presentation given in \cite{D1} and \cite{BL}.
Consider $ad:\wedge^{n-1}V\to \textrm{End}(V)$
given by $ad(a_1\wedge\ldots\wedge
a_{n-1})(b):=D_{a_1,\ldots a_{n-1}}(b)=[a_1,\ldots,a_{n-1},b]$. One can easily see that we
could have chosen the codomain of $ad$ to be $\textrm{Der}(V)$
(the set of derivations of $V$) instead of $\textrm{End}(V)$. $ad$ induces a map $\tilde{ad}:\wedge^{n-1}V\to
\textrm{End}(\wedge^\bullet V)$ defined as
$\tilde{ad}(a_1\wedge\ldots\wedge a_{n-1})(b_1\wedge\ldots\wedge
b_m)=\sum_{i=1}^{m}b_1\wedge\ldots\wedge[a_1,\ldots,a_{n-1},b_i]\wedge\ldots\wedge
b_m$. Denote by $\textrm{Inder}(V)$ the set of inner
derivations
of $V$, i.e. endomorphisms of the form $D_{a_1,\ldots a_{n-1}}=ad(a_1\wedge\ldots\wedge a_{n-1})$.\\
 The set of derivations $\hbox{Der}(V)$ of an $n$-Lie algebra V is a Lie algebra under the commutator and $\hbox{Inner}(V)$ is a Lie ideal. Notice the Lie brackect of $\hbox{Inner}(V)$ can be given by
$$[\hbox{ad}(a_1\wedge\cdots \wedge a_{n-1}), \, \hbox{ad}(b_1\wedge \cdots \wedge b_{n-1})]= \hbox{ad}(c_1\wedge\cdots \wedge c_{n-1}),$$
where
$$c_1\wedge\cdots \wedge c_{n-1}=\sum_{i=1}^{n-1}b_1\wedge\ldots\wedge[a_1,\ldots,a_{n-1},b_i]\wedge\ldots\wedge
b_{n-1}=\tilde{ad}(a)(b).$$
By skew symmetric condition $c_1\wedge\cdots \wedge c_{n-1}$ can be defined also by
$$c_1\wedge\cdots \wedge c_{n-1}=\sum_{i=1}^{n-1}a_1\wedge\ldots\wedge[b_1,\ldots,b_{n-1},a_i]\wedge\ldots\wedge
a_{n-1}=-\tilde{ad}(b)(a).$$
Then $\tilde{ad}$ is skew-symmetric (CF. \cite{BL2}). We give to $\wedge^{n-1}V$ a Lie algebra structure under the Lie bracket defined by

\begin{equation}[a,b]= \tilde{ad}(a)(b).\end{equation}
 Therefore this proposition follows,
\begin{proposition}\label{propsuryectivo ad}
$[\cdotp,\cdotp]$ defines a Lie algebra structure on
$\wedge^{n-1}V $ and \linebreak $ad:\wedge^{n-1}V\to
\textrm{Inder}(V)$ is a surjective Lie algebra homomorphism.
\end{proposition}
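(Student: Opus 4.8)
The plan is to verify three things: that the bracket $[a,b]=\tilde{ad}(a)(b)$ is well defined and skew-symmetric, that $ad$ intertwines it with the commutator on $\textrm{End}(V)$, and that it satisfies the Jacobi identity; surjectivity onto $\textrm{Inder}(V)$ is then immediate, since $\textrm{Inder}(V)$ is by definition the image of $ad$.

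First, well-definedness and skew-symmetry. Since $\tilde{ad}(a)$ acts as a derivation of $\wedge^\bullet V$ that replaces a single factor $b_i$ by $[a_1,\ldots,a_{n-1},b_i]\in V$, it preserves exterior degree, so $\tilde{ad}(a)(\wedge^{n-1}V)\subset \wedge^{n-1}V$ and the bracket maps $\wedge^{n-1}V\times\wedge^{n-1}V$ into $\wedge^{n-1}V$; bilinearity is built into the definition. Skew-symmetry, i.e.\ $\tilde{ad}(a)(b)=-\tilde{ad}(b)(a)$ as elements of $\wedge^{n-1}V$, is exactly the identity recorded just before the statement and established in \cite{BL2}, so I would simply invoke it.

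The core step is the homomorphism property $ad([a,b])=[ad(a),ad(b)]$, which I would obtain directly from the Filippov-Jacobi identity \eqref{FJ}. Evaluating the commutator on $x\in V$ gives
$$[ad(a),ad(b)](x)=[a_1,\ldots,a_{n-1},[b_1,\ldots,b_{n-1},x]]-[b_1,\ldots,b_{n-1},[a_1,\ldots,a_{n-1},x]],$$
and applying \eqref{FJ} to the first term (with $[b_1,\ldots,b_{n-1},x]$ as the inner bracket) expands it as a sum over $i$ of $[b_1,\ldots,[a_1,\ldots,a_{n-1},b_i],\ldots,b_{n-1},x]$ together with the term $[b_1,\ldots,b_{n-1},[a_1,\ldots,a_{n-1},x]]$, which cancels the second term. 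What remains is precisely $ad\big(\sum_i b_1\wedge\cdots\wedge[a_1,\ldots,a_{n-1},b_i]\wedge\cdots\wedge b_{n-1}\big)(x)=ad(\tilde{ad}(a)(b))(x)=ad([a,b])(x)$. This is the computation already sketched in the lead-up to the statement, so it only needs to be written cleanly.

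For the Jacobi identity I would avoid direct multi-index manipulation and instead prove the operator identity $\tilde{ad}([a,b])=[\tilde{ad}(a),\tilde{ad}(b)]$ on $\wedge^\bullet V$. Both sides are derivations of the exterior algebra (the left by construction, the right as a commutator of derivations), and a derivation of $\wedge^\bullet V$ is determined by its restriction to the generating subspace $V=\wedge^1 V$. On $x\in V$ the left side equals $ad([a,b])(x)$, while the right side equals $[ad(a),ad(b)](x)$, since $\tilde{ad}$ and $ad$ agree on degree one; these coincide by the homomorphism property just proved. Hence the two derivations agree everywhere. Applying this identity to $c\in\wedge^{n-1}V$ yields $[[a,b],c]=\tilde{ad}(a)(\tilde{ad}(b)(c))-\tilde{ad}(b)(\tilde{ad}(a)(c))=[a,[b,c]]-[b,[a,c]]$, which is the Jacobi identity in Leibniz form. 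I expect the main obstacle to be purely organizational: keeping the reduction-to-degree-one argument rigorous (checking that both expressions are genuine derivations and that equality on $V$ suffices) and matching indices carefully in the Filippov-Jacobi expansion; no idea beyond \eqref{FJ} is required.
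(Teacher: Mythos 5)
Your proof is correct, and its ingredients are exactly the ones the paper assembles in the discussion preceding Proposition~\ref{propsuryectivo ad}: the Filippov--Jacobi identity \eqref{FJ} gives $ad([a,b])=[ad(a),ad(b)]$, skew-symmetry of $\tilde{ad}$ is quoted from \cite{BL2}, and surjectivity holds because $\textrm{Inder}(V)$ is by definition the image of $ad$. The paper itself writes no further argument (``Therefore this proposition follows''), and the point it glosses over is precisely the one you treat with care: the Jacobi identity on $\wedge^{n-1}V$ is \emph{not} a formal consequence of $ad$ intertwining brackets, because $ad$ may have a nontrivial kernel for a general $n$-Lie algebra $V$ (it vanishes for $W^n$ by Remark~\ref{rmk1}, but the proposition is stated for arbitrary $V$); the homomorphism property alone only yields the Jacobi identity modulo $\hbox{Ker}(\hbox{ad})$. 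Your argument closes this gap cleanly: both $\tilde{ad}([a,b])$ and $[\tilde{ad}(a),\tilde{ad}(b)]$ are derivations of the exterior algebra $\wedge^{\bullet}V$, a derivation is determined by its restriction to the generating subspace $V=\wedge^{1}V$, and on $V$ the two operators reduce to $ad([a,b])$ and $[ad(a),ad(b)]$, which agree by the Filippov--Jacobi computation; evaluating the resulting operator identity on $\wedge^{n-1}V$ gives the Leibniz form of Jacobi, which together with skew-symmetry is equivalent to the Jacobi identity. So the route is the same as the paper's, but your reduction-to-degree-one step supplies the verification the paper leaves implicit, and it is genuinely needed rather than cosmetic.
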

\noindent Consider
$$
\hbox{Ker}\hbox{(ad)}=\{a_1\wedge \cdots \wedge a_{n-1} \in
\wedge ^{n-1}V:\, \hbox{ad}(a_1\wedge \cdots \wedge
a_{n-1})(b)=0 \hbox{ for all}\, b \in V\},
$$
and
$$\hbox{Ker}\tilde{\hbox{(ad)}}=\{a_1\wedge \cdots \wedge
a_{n-1} \in \wedge ^{n-1}V:\, \tilde{\hbox{ad}}(a_1\wedge
\cdots \wedge a_{n-1})(b)=0 \hbox{ for all}\, b \in
\wedge^{\bullet}V \}$$
It is straightforward to check that $\hbox{Ker}(\hbox{ad})$ is an abelian ideal of $\wedge^{n-1}V$ and $\hbox{Ker} (\hbox{ad})\subseteq\hbox{Ker}(\tilde{\hbox{ad}})$. Thus
\begin{equation}\wedge ^{n-1}V/ \hbox{Ker}\hbox{(ad)}\simeq
\hbox{Inder}(V),
\end{equation} as Lie algebras. Thus,
\begin{equation}\label{iso1}\wedge^{n-1}V\simeq \hbox{Ker}\hbox{(ad)}\rtimes
\hbox{Inder}(V).\end{equation}

\,

A vector space $M$ is called an $n$-\textit{Lie module} for the $n$-Lie
algebra $V$, if on the direct sum $V\oplus M$ there is
a structure of  $n$-Lie algebra, such that the following
conditions are satisfied:
\begin{itemize}
\item $V$ is a subalgebra;
\item $M$ is an abelian ideal, i.e. when at least two slots of the
$n$-bracket are occupied by elements in $M$, the result is 0.
\end{itemize}

We have the following results that establish some relations between representations of $\wedge ^{n-1}V$ and $n$-Lie modules.

\begin{theorem}\label{Main theorem}\begin{itemize}\item[1)] Let $M$ be an $n$-Lie module  of the $n$-Lie algebra
$V$ and define $\rho: \wedge^{n-1} V \rightarrow
\hbox{End}(M) $
 given by $$\rho(a_1\wedge \cdots \wedge
a_{n-1})(m):=[a_1,\cdots, a_{n-1}, m]$$ for all $m \in M,$ where
this $n$-Lie bracket corresponds to the $n$-Lie structure of $V\oplus M.$ Then $\rho$ is an homomorphism of Lie algebras.

\item[2)]Given $(M, \rho)$ a representation of $\wedge^{n-1}V$
such that the two sided ideal $Q(V)$ of the universal enveloping algebra
of $\wedge ^{n-1}V,$ generated by the elements
\begin{equation}\label{vanesa condition}x_{a_1, \cdots,
a_{2n-2}}=[a_1, \cdots, a_n]\wedge a_{n+1}\wedge \cdots a_{2n-2}-$$
$$-\sum_{i=1}^n(-1)^{i+n}(a_1 \wedge \cdots \wedge\hat{a_i} \wedge \cdots \wedge a_n)(a_i\wedge a_{n+1}\wedge \cdots \wedge a_{2n-2})\end{equation}
acts trivially on $M,$ then $M$ is an $n$-Lie module.
\end{itemize}
\end{theorem}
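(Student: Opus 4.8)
The proof of both statements rests on one elementary principle: analyze the Filippov--Jacobi identity \eqref{FJ} on the space $V\oplus M$ by counting how many of its $2n-1$ arguments lie in $M$. Because $M$ is required to be an abelian ideal, any iterated bracket in which two or more slots are ultimately occupied by elements of $M$ vanishes; moreover a single bracketing never destroys an $M$-argument, since the result $[a_1,\dots,a_{n-1},m]=\rho(\cdots)(m)$ again lies in $M$. Hence the number of $M$-arguments is preserved in every term of \eqref{FJ}, and the only instances with any chance of being non-trivial are those with exactly one argument in $M$ and all others in $V$.

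For part 1), assume $V\oplus M$ is an $n$-Lie algebra and apply \eqref{FJ} with $a_1,\dots,a_{n-1}\in V$ and $(b_1,\dots,b_n)=(c_1,\dots,c_{n-1},m)$, where $a=a_1\wedge\cdots\wedge a_{n-1}$ and $c=c_1\wedge\cdots\wedge c_{n-1}$ lie in $\wedge^{n-1}V$ and $m\in M$. The left-hand side equals $\rho(a)\rho(c)(m)$, the $j=n$ term on the right equals $\rho(c)\rho(a)(m)$, and the remaining $n-1$ terms collapse, by the very definition $[a,c]=\tilde{ad}(a)(c)$ of the bracket on $\wedge^{n-1}V$, to $\rho([a,c])(m)$. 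Reading off the identity yields $\rho([a,c])=[\rho(a),\rho(c)]$, so $\rho$ is a homomorphism of Lie algebras.

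For part 2), I would first \emph{define} a candidate bracket on $V\oplus M$: it restricts to the given $n$-ary bracket on $V$, it is declared to vanish whenever two or more slots lie in $M$, and on a configuration with exactly one $M$-argument $m$ and the rest $a_1,\dots,a_{n-1}\in V$ it equals $\rho(a_1\wedge\cdots\wedge a_{n-1})(m)$, extended to arbitrary positions of $m$ by anti-symmetry (this is consistent precisely because $\rho$ is already defined on the anti-symmetric space $\wedge^{n-1}V$). Anti-commutativity is then built in, so it remains only to verify \eqref{FJ}. By the counting principle above this splits into three cases: (i) all $2n-1$ arguments in $V$, which is exactly the Filippov--Jacobi identity of $V$; (ii) two or more arguments in $M$, where both sides vanish term by term; and (iii) exactly one argument in $M$. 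In case (iii), if the $M$-element sits among the acted-upon slots $b_1,\dots,b_n$ the identity reduces to $\rho([a,c])=[\rho(a),\rho(c)]$, which holds because $\rho$ is a Lie homomorphism by hypothesis. If instead it sits among the derivation slots, then after moving $m$ to the last slot by anti-symmetry the identity becomes
\begin{equation*}
\rho\big([b_1,\dots,b_n]\wedge a_1\wedge\cdots\wedge a_{n-2}\big)=\sum_{i=1}^{n}(-1)^{i+n}\,\rho\big(b_1\wedge\cdots\wedge\hat{b_i}\wedge\cdots\wedge b_n\big)\,\rho\big(b_i\wedge a_1\wedge\cdots\wedge a_{n-2}\big),
\end{equation*}
which is precisely the statement that the generator $x_{b_1,\dots,b_n,a_1,\dots,a_{n-2}}$ of $Q(V)$ annihilates $M$. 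Thus the three cases are covered respectively by the $n$-Lie structure of $V$, the homomorphism hypothesis, and the triviality of $Q(V)$, and $V\oplus M$ becomes an $n$-Lie algebra in which $M$ is an abelian ideal, i.e.\ an $n$-Lie module.

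The step I expect to be the main obstacle is the sign bookkeeping in case (iii): one must transport the single $M$-argument to a fixed reference slot inside each nested bracket and check that the signs produced by anti-symmetry of the $n$-ary bracket, together with the Koszul rule on $\wedge^{n-1}V$, assemble into exactly the coefficients $(-1)^{i+n}$ of the defining relation \eqref{vanesa condition}. Verifying that these signs match on the nose, rather than merely up to an $i$-dependent factor, is the only genuinely delicate part; the vanishing arguments of cases (i) and (ii) and the identification in the acted-upon case are routine.
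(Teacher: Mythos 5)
Your proposal is correct and follows essentially the same route as the paper: both define the bracket on $V\oplus M$ (abelian ideal $M$, subalgebra $V$, mixed bracket via $\rho$) and reduce the Filippov--Jacobi identity to exactly two nontrivial configurations, the one with $m$ in an acted-upon slot being equivalent to $\rho$ being a Lie homomorphism, and the one with $m$ in a derivation slot being equivalent to the triviality of $Q(V)$, with your signs $(-1)^{i+n}$ matching the paper's equation (\ref{eq3}). Your argument is in fact somewhat more complete, since the counting principle and the vanishing cases that justify the paper's phrase ``it is enough to show'' are spelled out rather than left implicit.
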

\begin{proof}
Part $1)$ is direct from the definition of the Lie bracket in
$\wedge^{n-1}V$ and the Filippov-Jacobi  identity of the
$n$-Lie bracket corresponding to the $n$-Lie structure of the
semidirect product of $V$ and $M.$

Let's prove part $2).$ Consider the $n$-ary map $[[\,,\,]]:
\wedge^{n-1}(V\ltimes M)\rightarrow V\ltimes M$ such
that $M$ is an abelian ideal   and $V$ is a subalgebra with its
own $n$-Lie bracket and define
\begin{equation}\label{doble bracket}[[a_1, \cdots, a_{n-1},
m]]:=\rho(a_1\wedge\cdots \wedge a_{n-1})(m)\end{equation} where
$a_i \in V, \, m \in M.$ We need to show the Filippov-Jacobi
identity holds for the $n$-ary bracket defined above. It is
enough to show that
$$[[a_1,\cdots, a_{n-1},[[b_1,\cdots,
b_{n-1},m]]]]-[[b_1,\cdots, b_{n-1},[[a_1,\cdots, a_{n-1},m]]]]=$$
\begin{equation}\label{first Jacobi identity}
\sum\limits_{i=1}^{n-1}[[b_1,\cdots [a_1,\cdots, a_{n-1}, b_i],
\cdots, b_{n-1}, m]]
\end{equation}
and

 $[[[a_1,\cdots, a_n],a_{n+1},\cdots, a_{2n-2},m]]=$
\begin{equation}\label{second Jacobi identity}\sum\limits_{i=1}^{n-1}(-1)^{n+i+1}[[a_{1},\cdots
[a_{n+1},\cdots, a_{2n-2}, a_{i},m], \cdots, a_{2n-2}]]
\end{equation}
hold for $a_i$ and $b_i \in V$  and $m
\in M.$ 

Since $\rho$ is a representation of $\wedge^{n-1} V$ and $\rho[a,b]=\rho(\tilde{ad}(a)(b))$ by definition of the Lie bracket, then  the identity (\ref{first Jacobi identity}) holds.

Let's prove the identity (\ref{second Jacobi identity}).
 Writing the identity (\ref{second Jacobi identity}) using (\ref{doble bracket})
we have that
$$\rho([a_1,\ldots,a_n]\wedge a_{n+1}\wedge\ldots\wedge
a_{2n-2})(m)$$
\begin{equation}\label{eq3}=\sum_{i=1}^n(-1)^{i+n}\rho(a_1\wedge\ldots\wedge\hat{a_i}\wedge\ldots\wedge
a_n)\rho(a_i\wedge a_{n+1}\wedge\ldots\wedge a_{2n-2})(m).
\end{equation}
Therefore (\ref{eq3}) is equivalent to the fact that the ideal $Q(V)$ acts
trivially on $M,$ finishing our proof.
\end{proof}
The following Proposition was proven in  \cite{D1}.
\begin{proposition}\label{prop2} Let $M$ be a $n$-Lie module over an $n$-Lie algebra $V.$ Then any submodule,
any factor-module and dual module of $M$ are also $n$-Lie modules.
If $M_1$ and $M_2$ are $n$-Lie modules over $V,$ then their direct
sum $M_1\oplus M_2$ is also $n$-Lie module.
%

\end{proposition}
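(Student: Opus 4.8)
The plan is to reduce every assertion to the characterization in Theorem \ref{Main theorem}: a vector space carrying a representation $\rho$ of the Lie algebra $\wedge^{n-1}V$ is an $n$-Lie module exactly when the two-sided ideal $Q(V)$ generated by the elements (\ref{vanesa condition}) annihilates it. So for each construction I would first put on it the natural $\wedge^{n-1}V$-module structure — submodule, quotient, direct sum and contragredient are all standard for representations of a Lie algebra, so the identity (\ref{first Jacobi identity}) holds automatically — and then check that $Q(V)$ still acts by zero, which by (\ref{eq3}) is what remains for the second Filippov--Jacobi identity (\ref{second Jacobi identity}).

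The cases of a submodule, a factor module and a direct sum are then immediate. If $N\subseteq M$ is a submodule, $\rho|_{N}$ is defined by invariance and $\rho(Q(V))|_{N}=0$ because $\rho(Q(V))=0$. On a factor module the induced representation $\bar\rho$ satisfies $\bar\rho(Q(V))=\overline{\rho(Q(V))}=0$. On $M_1\oplus M_2$ one has $(\rho_1\oplus\rho_2)(Q(V))=\rho_1(Q(V))\oplus\rho_2(Q(V))=0$. Equivalently, in terms of the semidirect product: $V\oplus N$ is a subalgebra of $V\oplus M$, the subspace $N$ is an ideal of $V\oplus M$ with $(V\oplus M)/N\simeq V\oplus(M/N)$, and $V\oplus(M_1\oplus M_2)$ inherits its bracket slotwise, so in each case the Filippov--Jacobi identity (\ref{FJ}) descends from that of $V\oplus M$.

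The substantive case is the dual. I would equip $M^{*}$ with the contragredient action $\rho^{*}(x)=-\rho(x)^{*}$ for $x\in\wedge^{n-1}V$, which is again a representation of $\wedge^{n-1}V$, so (\ref{first Jacobi identity}) holds on $M^{*}$; the point is to verify that $Q(V)$ annihilates $M^{*}$. Since $\rho^{*}(u)=\rho(S(u))^{*}$ for the principal anti-automorphism $S$ of the universal enveloping algebra of $\wedge^{n-1}V$, the condition $\rho^{*}(Q(V))=0$ is equivalent to $S(Q(V))\subseteq\ker\rho$. I would therefore evaluate $S$ on a generator $x_{a_1,\ldots,a_{2n-2}}$ of $Q(V)$: writing $u_i=a_1\wedge\cdots\wedge\hat{a_i}\wedge\cdots\wedge a_n$, $v_i=a_i\wedge a_{n+1}\wedge\cdots\wedge a_{2n-2}$ and $y=[a_1,\ldots,a_n]\wedge a_{n+1}\wedge\cdots\wedge a_{2n-2}$, one has $S(y)=-y$ and $S(u_iv_i)=v_iu_i$, so that $S(x_{a_1,\ldots,a_{2n-2}})$ is expressed through the reversed products $v_iu_i$.

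The remaining and hardest step is to re-order these products by the commutation relation $v_iu_i=u_iv_i-[u_i,v_i]$ and to show that the correction terms lie in $\ker\rho$; concretely this reduces to controlling the anticommutator combination $\sum_{i=1}^{n}(-1)^{i+n}\bigl(\rho(u_i)\rho(v_i)+\rho(v_i)\rho(u_i)\bigr)$ on $M$. To handle it I would expand each Lie bracket $[u_i,v_i]=\tilde{ad}(u_i)(v_i)$ from the definition and collect terms using the skew-symmetry of the $n$-ary bracket, in particular $[a_1,\ldots,\hat{a_i},\ldots,a_n,a_i]=(-1)^{n-i}[a_1,\ldots,a_n]$. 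This computation, which is the only place where the precise form of the generators (\ref{vanesa condition}) and of $\tilde{ad}$ enters, is where I expect the real difficulty to sit; the rest of the argument is bookkeeping.
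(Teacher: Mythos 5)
Your reduction to Theorem \ref{Main theorem}, and the treatment of submodules, factor modules and direct sums, are correct: once one knows that a vector space is an $n$-Lie module exactly when it carries a representation of $\wedge^{n-1}V$ killing $Q(V)$, those three cases are indeed immediate. Your setup for the dual is also sound: with the contragredient action, $\rho^{*}(Q(V))=0$ is equivalent (given $\rho(Q(V))=0$ on $M$) to the anticommutator condition $\sum_{i=1}^{n}(-1)^{i+n}\bigl(\rho(u_i)\rho(v_i)+\rho(v_i)\rho(u_i)\bigr)=0$, in your notation $u_i=a_1\wedge\cdots\wedge\hat{a_i}\wedge\cdots\wedge a_n$, $v_i=a_i\wedge a_{n+1}\wedge\cdots\wedge a_{2n-2}$, $y=[a_1,\ldots,a_n]\wedge a_{n+1}\wedge\cdots\wedge a_{2n-2}$. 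But your proposal stops exactly at the point that carries the whole content of the proposition: you never prove that identity, you only announce that you would expand $[u_i,v_i]=\tilde{ad}(u_i)(v_i)$ and collect terms by skew-symmetry, and that this is where you ``expect the real difficulty to sit.'' Since the paper itself gives no proof (it quotes \cite{D1}), the dual case is precisely what a proof of this proposition must supply, so this is a genuine gap, not bookkeeping.

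Moreover, the method you sketch cannot close the gap by itself. Expanding gives $\sum_{i=1}^{n}(-1)^{i+n}[u_i,v_i]=n\,y+(\hbox{cross terms})$, where the cross terms involve the brackets $[a_1,\ldots,\hat{a_i},\ldots,a_n,a_j]$ with $j>n$; skew-symmetry accounts only for the $n\,y$ part, and what remains to be shown is that the cross terms act on $M$ by $(2-n)\rho(y)$. These correction terms are \emph{not} in $\ker\rho$ term by term, and no amount of skew-symmetry makes them so: one must invoke, a second time, the hypothesis that $Q(V)$ kills $M$, applied to \emph{permuted} tuples of the $a$'s, combined with the Lie-homomorphism property (\ref{first Jacobi identity}). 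For instance, for $n=3$, write $P_{ij}=\rho(a_i\wedge a_j)$; the relation (\ref{eq3}) for the tuple $(a_1,a_2,a_3,a_4)$ reads $\rho([a_1,a_2,a_3]\wedge a_4)=P_{23}P_{14}-P_{13}P_{24}+P_{12}P_{34}$, the one for $(a_1,a_2,a_4,a_3)$ reads $\rho([a_1,a_2,a_4]\wedge a_3)=P_{24}P_{13}-P_{14}P_{23}-P_{12}P_{34}$, and subtracting and comparing with $[P_{12},P_{34}]=\rho([a_1\wedge a_2,a_3\wedge a_4])$ yields exactly
\begin{equation*}
\{P_{12},P_{34}\}-\{P_{13},P_{24}\}+\{P_{14},P_{23}\}=0 ,
\end{equation*}
which is the identity your dual case needs (here $\{\cdot,\cdot\}$ is the anticommutator); for general $n$ an analogous but more elaborate argument over permuted generators is required. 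Without some such step, your argument establishes the proposition only for submodules, quotients and direct sums, not for the dual module.
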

As in \cite{D1} we deduce the following Corollary.
\begin{corollary}\label{corol1} Let $M$ be a $n$-Lie module over $n$-Lie algebra $V.$ Then
\begin{itemize}
\item [a)] $M$ is irreducible if and only if $M$ is irreducible as a Lie module over Lie algebra $\wedge^{n-1}V.$
\item [b)] $M$ is completely reducible, if only if $M$ is completely reducible as a Lie module over Lie algebra $\wedge^{n-1}V.$
\end{itemize}
\end{corollary}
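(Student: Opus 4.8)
The plan is to reduce both parts to a single structural observation: for an $n$-Lie module $M$ over $V$, the lattice of $n$-Lie submodules of $M$ coincides with the lattice of $\wedge^{n-1}V$-submodules of $M$ under the representation $\rho$ from Theorem \ref{Main theorem}. By Theorem \ref{Main theorem} part 1) the assignment $\rho(a_1\wedge\cdots\wedge a_{n-1})(m)=[a_1,\cdots,a_{n-1},m]$ already makes $M$ into a Lie module over $\wedge^{n-1}V$, so no appeal to the triviality of $Q(V)$ is needed here. First I would verify that a subspace $N\subseteq M$ is stable under all inner operators $[a_1,\cdots,a_{n-1},-]$ with $a_i\in V$ if and only if it is stable under $\rho(\wedge^{n-1}V)$; since these operators are literally the values of $\rho$, the two stability conditions are identical. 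The abelian-ideal condition defining an $n$-Lie module is inherited automatically by any subspace, and Proposition \ref{prop2} guarantees that such an $N$ is in fact an $n$-Lie module. Hence $N$ is an $n$-Lie submodule precisely when it is a $\wedge^{n-1}V$-submodule.

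Granting this identification, part (a) is immediate. The module $M$ fails to be irreducible as an $n$-Lie module exactly when it possesses a nonzero proper $n$-Lie submodule, which by the identification is the same as possessing a nonzero proper $\wedge^{n-1}V$-submodule, i.e. failing to be irreducible as a Lie module over $\wedge^{n-1}V$. Passing to contrapositives in both directions yields the desired equivalence. For part (b) I would note that complete reducibility is a property formulated entirely in terms of the submodule lattice together with the notion of irreducibility, both of which are preserved by the identification. Concretely, if $M=\bigoplus_i M_i$ is a decomposition into irreducible $\wedge^{n-1}V$-submodules, then each $M_i$ is an $n$-Lie submodule, irreducible as an $n$-Lie module by part (a), while Proposition \ref{prop2} ensures that each summand and the direct sum carry compatible $n$-Lie module structures; thus $M$ is completely reducible as an $n$-Lie module, and the converse is symmetric.

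I do not expect a serious obstacle, as the statement is essentially formal once Theorem \ref{Main theorem} and Proposition \ref{prop2} are in hand. The only point requiring genuine care is the clean identification of the two submodule lattices and the verification that the subobjects produced along the way (submodules, quotients, direct summands) remain honest $n$-Lie modules rather than merely $\wedge^{n-1}V$-modules; this is exactly what Proposition \ref{prop2} supplies. It is worth emphasizing in the write-up that, because $M$ is assumed to be an $n$-Lie module from the outset, only the forward direction of Theorem \ref{Main theorem} enters, so the ideal $Q(V)$ plays no role in the argument.
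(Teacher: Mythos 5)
Your proposal is correct and is essentially the paper's own argument: the paper gives no detailed proof, simply deducing the corollary from Proposition \ref{prop2} ``as in \cite{D1}'', and the deduction it has in mind is exactly your observation that the $n$-Lie submodules of $M$ and the $\wedge^{n-1}V$-submodules of $M$ are the same subspaces, since the action operators coincide and Proposition \ref{prop2} keeps all the relevant subobjects inside the category of $n$-Lie modules. Your remark that $Q(V)$ plays no role (only part 1) of Theorem \ref{Main theorem} is needed) is also consistent with the paper.
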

Since we are aiming the study of the representation theory of $V$ as an $n$-Lie algebra,  Theorem \ref{Main theorem}  shows that it is closely
related to the representation theory of the Lie algebra
$\wedge^{n-1}V.$ But first, due to (\ref{iso1}),
we need to characterize the ideal $\hbox{Ker}\hbox{(ad)}.$ We have
the following Lemma.
\begin{lemma}\label{lem2} If $a \in \hbox{Ker}(\hbox{ad})$ and  $\rho$ is a
representation of $\wedge^{n-1}V,$ then $\rho(a)$ commutes with $\rho(b)$ for any $b \in
\wedge^{n-1}V.$
\end{lemma}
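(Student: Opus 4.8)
The plan is to upgrade the hypothesis $a\in\hbox{Ker}(\hbox{ad})$ to the stronger assertion that $a$ is \emph{central} in the Lie algebra $\wedge^{n-1}V$, after which the conclusion follows immediately from the fact that $\rho$ is a homomorphism of Lie algebras.

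First I would recall the two ingredients already supplied in the text: the Lie bracket on $\wedge^{n-1}V$ is defined by $[a,b]=\tilde{\hbox{ad}}(a)(b)$, and one has the inclusion $\hbox{Ker}(\hbox{ad})\subseteq\hbox{Ker}(\tilde{\hbox{ad}})$. Taking $a\in\hbox{Ker}(\hbox{ad})$, these together give $a\in\hbox{Ker}(\tilde{\hbox{ad}})$, which by the very definition of this kernel means $\tilde{\hbox{ad}}(a)(c)=0$ for \emph{every} $c\in\wedge^{\bullet}V$. Specializing $c$ to an arbitrary $b\in\wedge^{n-1}V$ yields $[a,b]=\tilde{\hbox{ad}}(a)(b)=0$, so $a$ lies in the center of $\wedge^{n-1}V$. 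Since the bracket is bilinear and $\tilde{\hbox{ad}}$ is linear in $a$, allowing $a$ and $b$ to be arbitrary, not necessarily decomposable, elements causes no difficulty.

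Finally, because $\rho:\wedge^{n-1}V\to\hbox{End}(M)$ is a homomorphism of Lie algebras it carries brackets to commutators, so for every $b\in\wedge^{n-1}V$ we obtain $[\rho(a),\rho(b)]=\rho([a,b])=\rho(0)=0$; that is, $\rho(a)$ commutes with $\rho(b)$, as claimed. I expect no real obstacle here: the statement is formal once the inclusion $\hbox{Ker}(\hbox{ad})\subseteq\hbox{Ker}(\tilde{\hbox{ad}})$ is in hand. It is worth emphasizing which fact does the work, however: the weaker property that $\hbox{Ker}(\hbox{ad})$ is merely an abelian ideal would only give commutativity of $\rho(a)$ with $\rho(a')$ for $a'\in\hbox{Ker}(\hbox{ad})$, whereas the inclusion into $\hbox{Ker}(\tilde{\hbox{ad}})$ is precisely what promotes $a$ to a central element and hence forces commutativity with all of $\rho(\wedge^{n-1}V)$.

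If one prefers to verify that inclusion rather than cite it, I would check it on a decomposable $a=a_1\wedge\cdots\wedge a_{n-1}$ using $\tilde{\hbox{ad}}(a)(b_1\wedge\cdots\wedge b_m)=\sum_{i=1}^{m} b_1\wedge\cdots\wedge[a_1,\ldots,a_{n-1},b_i]\wedge\cdots\wedge b_m$: each summand contains the factor $[a_1,\ldots,a_{n-1},b_i]=\hbox{ad}(a)(b_i)=0$, so the entire sum vanishes, and the general case follows by linearity.
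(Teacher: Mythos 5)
Your proof is correct and is essentially the same argument as the paper's: both pass from $a\in\hbox{Ker}(\hbox{ad})\subseteq\hbox{Ker}(\tilde{\hbox{ad}})$ to $[a,b]=\tilde{\hbox{ad}}(a)(b)=0$ and then apply the homomorphism $\rho$ to conclude $\rho(a)\rho(b)-\rho(b)\rho(a)=\rho([a,b])=0$. Your extra verification of the inclusion $\hbox{Ker}(\hbox{ad})\subseteq\hbox{Ker}(\tilde{\hbox{ad}})$ on decomposable elements is a detail the paper states without proof, but it does not change the route.
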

\begin{proof}Consider $a \in \hbox{Ker}(\hbox{ad})\subseteq\hbox{Ker}(\tilde{\hbox{ad}}).$ By definition of Lie bracket in $\wedge^{n-1}V $ follows
$$\rho(a)\rho(b)-\rho(b)\rho(a)=\rho[a,b]=\rho(\tilde{\hbox{ad}}(a)(b))=0.$$
\end{proof}
Thus, we have the following Proposition.
\begin{proposition}\label{Schur lemma} Let $\rho$ be an irreducible representation of
$\wedge^{n-1}V$ in $M$ with countable dimension. Then $\hbox{Ker}(\text{ad})$ acts by
scalars in $M.$
\end{proposition}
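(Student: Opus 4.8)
The plan is to recognize this as an instance of Dixmier's version of Schur's lemma and to reduce it to Lemma~\ref{lem2}. First I would observe that for $a\in\hbox{Ker}(\hbox{ad})$, Lemma~\ref{lem2} says exactly that $\rho(a)$ lies in the commutant $D:=\operatorname{End}_{\wedge^{n-1}V}(M)$ of the representation, i.e. $\rho(a)$ is a $\wedge^{n-1}V$-module endomorphism of $M$. Thus the Proposition is equivalent to the assertion that $D=\mathbb{F}\,\hbox{Id}_M$, and it suffices to prove this latter statement for any irreducible representation $M$ of countable dimension over the (algebraically closed, uncountable) field $\mathbb{F}$.

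Next I would set up the two standard ingredients. Since $M$ is irreducible, any nonzero $S\in D$ has kernel and image that are $\wedge^{n-1}V$-submodules of $M$, hence both trivial, so $S$ is invertible; therefore $D$ is a division algebra over $\mathbb{F}$. Fixing a nonzero vector $v\in M$, the evaluation map $D\to M$, $S\mapsto Sv$, is injective (an invertible $S$ cannot kill $v$), so $\dim_{\mathbb{F}}D\le\dim_{\mathbb{F}}M$, which is countable by hypothesis.

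Finally I would run the transcendence/cardinality argument on $T:=\rho(a)\in D$. The commutative subalgebra $\mathbb{F}[T]\subseteq D$ is a domain; if $T$ were algebraic over $\mathbb{F}$ then $\mathbb{F}[T]$ would be a finite-dimensional domain over $\mathbb{F}$, hence a finite field extension of $\mathbb{F}$, forcing $\mathbb{F}[T]=\mathbb{F}$ and $T\in\mathbb{F}\,\hbox{Id}_M$ by algebraic closedness. So if $T$ is not scalar it must be transcendental, and then $\mathbb{F}(T)\subseteq D$ and the family $\{(T-\lambda\,\hbox{Id}_M)^{-1}:\lambda\in\mathbb{F}\}$ (each term exists because $D$ is a division algebra and $T-\lambda\,\hbox{Id}_M\ne0$) is $\mathbb{F}$-linearly independent, by the uniqueness of partial fractions in $\mathbb{F}(x)\cong\mathbb{F}(T)$. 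Since $\mathbb{F}$ is uncountable this produces uncountably many linearly independent elements of $D$, contradicting $\dim_{\mathbb{F}}D\le\aleph_0$. Hence $T\in\mathbb{F}\,\hbox{Id}_M$, i.e. $\hbox{Ker}(\hbox{ad})$ acts by scalars.

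The main obstacle is purely the cardinality bookkeeping in the last step: the argument genuinely uses that the base field is uncountable, so that the countable-dimensionality hypothesis on $M$ can be played off against $|\mathbb{F}|$, which is why the statement is restricted to representations of countable dimension; over a merely algebraically closed field the conclusion can fail. Everything else (Schur's division-algebra dichotomy, the injectivity of evaluation, and the linear independence of the resolvents $(T-\lambda\,\hbox{Id}_M)^{-1}$) is routine.
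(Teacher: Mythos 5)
Your proof is correct and takes essentially the same route as the paper: the paper's entire proof reads ``Immediate from the Lemma above and Schur Lemma,'' i.e.\ Lemma~\ref{lem2} puts $\rho(a)$ in the commutant and the Dixmier version of Schur's lemma (for irreducible modules of countable dimension) makes it a scalar. You have merely written out the standard proof of that Schur variant in full (division-algebra commutant, injectivity of evaluation, linear independence of the resolvents $(T-\lambda\,\hbox{Id}_M)^{-1}$), correctly flagging the uncountability of $\mathbb{F}$ as the hypothesis the paper uses tacitly.
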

\begin{proof} Immediate from the Lemma above and Schur Lemma.
\end{proof}
\begin{theorem} \label{thm3} Let $(M,\rho)$ be an irreducible representation of
$\wedge^{n-1}V$ such that
the ideal $Q(V)$ acts trivially on $M$. Then
\begin{itemize}
\item[a)] $\rho|_{\hbox{Ker}\hbox{(ad)}}:=\lambda\,Id$ with $Id$
the identity map in $\hbox{End}(M)$ and $\lambda \in
(\hbox{Ker}\hbox{(ad)})^{\ast}$  is an $\hbox{Inder}(\frak
g)$-module homomorphism (where $\mathbb{F}$ is thought as a trivial
$\hbox{Inder}(V)$-module),
\item[b)] $\rho|_{\hbox{Inder}(V)}$ is an irreducible representation of $\hbox{Inder}(V)$
 such that the ideal
$Q(V)$ acts trivially on $M.$
\item[c)] $\rho=\rho|_{\hbox{Inder}(V)}\oplus \lambda\, Id.$
\end{itemize}
\end{theorem}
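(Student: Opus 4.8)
The plan is to extract everything from the scalar action of $\hbox{Ker}(\hbox{ad})$ guaranteed by Proposition \ref{Schur lemma}, together with the semidirect product decomposition (\ref{iso1}). First I would invoke Proposition \ref{Schur lemma}: since $(M,\rho)$ is irreducible, $\hbox{Ker}(\hbox{ad})$ acts by scalars, so there is a linear functional $\lambda \in (\hbox{Ker}(\hbox{ad}))^{\ast}$ with $\rho(a) = \lambda(a)\,Id$ for every $a \in \hbox{Ker}(\hbox{ad})$. This produces the $\lambda$ appearing in the statement and is the engine for all three parts.

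For part a), I would show that $\lambda$ kills every bracket $[b,a]$ with $a \in \hbox{Ker}(\hbox{ad})$ and $b \in \wedge^{n-1}V$ (in particular for $b \in \hbox{Inder}(V)$ via the splitting of (\ref{iso1})). The key observation is that $\hbox{Ker}(\hbox{ad})$ is an ideal, so $[b,a] \in \hbox{Ker}(\hbox{ad})$ and hence $\rho([b,a]) = \lambda([b,a])\,Id$. On the other hand, since $\rho$ is a Lie algebra homomorphism and $\rho(a)$ is scalar, $\rho([b,a]) = [\rho(b),\rho(a)] = \lambda(a)[\rho(b),Id] = 0$. Comparing the two expressions gives $\lambda([b,a]) = 0$, which is exactly the assertion that $\lambda \colon \hbox{Ker}(\hbox{ad}) \to \mathbb{F}$ is a homomorphism of $\hbox{Inder}(V)$-modules into the trivial module.

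For part b), the idea is that the scalar action makes $\wedge^{n-1}V$-submodules and $\hbox{Inder}(V)$-submodules coincide. Concretely, if $N \subseteq M$ is stable under $\rho(\hbox{Inder}(V))$, then since every $a \in \hbox{Ker}(\hbox{ad})$ acts as the scalar $\lambda(a)$ it also satisfies $\rho(a)N \subseteq N$; because $\wedge^{n-1}V = \hbox{Ker}(\hbox{ad}) \oplus \hbox{Inder}(V)$ as vector spaces by (\ref{iso1}), $N$ is then a $\wedge^{n-1}V$-submodule. Irreducibility of $\rho$ over $\wedge^{n-1}V$ forces $N = 0$ or $N = M$, so $\rho|_{\hbox{Inder}(V)}$ is irreducible; the triviality of $Q(V)$ is inherited since its generators (\ref{vanesa condition}) are the same operators under $\rho$ and hence still vanish. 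Finally, part c) is immediate bookkeeping: writing $x = a + b$ with $a \in \hbox{Ker}(\hbox{ad})$ and $b \in \hbox{Inder}(V)$ according to (\ref{iso1}) and using linearity, $\rho(x) = \lambda(a)\,Id + \rho|_{\hbox{Inder}(V)}(b)$.

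The main obstacle I anticipate is making the decomposition (\ref{iso1}) precise enough to restrict $\rho$ to $\hbox{Inder}(V)$ honestly: one must fix a vector-space splitting of the semidirect product and check that $\rho|_{\hbox{Inder}(V)}$ is genuinely a Lie-algebra representation of $\hbox{Inder}(V)$ rather than merely of the abstract quotient. Once the splitting is in place, the subtle point in part b) is interpreting the statement that $Q(V)$ — which lives in the enveloping algebra of $\wedge^{n-1}V$, not of $\hbox{Inder}(V)$ — acts trivially; I would handle this by re-expressing $\rho$ of each generator of $Q(V)$ through the decomposition $\rho(\cdot) = \rho|_{\hbox{Inder}(V)}(\cdot) + \lambda(\cdot)\,Id$, so that the triviality condition becomes a condition phrased purely in terms of the restricted representation and the functional $\lambda$.
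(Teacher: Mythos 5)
Your proof is correct and takes essentially the same route as the paper: both rest on Proposition \ref{Schur lemma} (scalar action of $\hbox{Ker}(\hbox{ad})$), deduce part a) by comparing $\rho([b,a])=\lambda([b,a])\,Id$ with $[\rho(b),\rho(a)]=0$ for $[b,a]$ in the ideal $\hbox{Ker}(\hbox{ad})$, and get parts b) and c) from the decomposition (\ref{iso1}). The only (cosmetic) difference is in part b), where you argue directly that any $\hbox{Inder}(V)$-stable subspace is automatically $\wedge^{n-1}V$-stable, while the paper passes through an auxiliary cyclic subspace $\tilde{N}=\rho(\hbox{Inder}(V))(m)$; your version is, if anything, slightly cleaner.
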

\begin{proof} Let's prove part a). If $l \in \hbox{Inder}(V)$ and $a \in {\hbox{Ker}\hbox{(ad)}},$ since
${\hbox{Ker}\hbox{(ad)}}$ is an abelian  ideal, by Lemma \ref{lem2} we have
$0=\rho([l,a])(m)=\lambda([l,a])\,Id(m)$ . Thus
$\lambda$ is an $\hbox{Inder}(V)$-module homomorphism.

 Let's prove part b). Consider $N\varsubsetneq M$ a non-trivial $\hbox{Inder}(V)$-{sub\-re\-pre\-sen\-ta\-tion} of $M$ and take $0 \neq m \in M$ such that $0
\neq \tilde{N}:=\rho(\hbox{Inder}(V))(m)\subseteq N.$ Note if
$a \in {\hbox{Ker}\hbox{(ad)}},$ due to Lemma \ref{lem2} and
Proposition \ref{Schur lemma},\, $\rho(a)
\tilde{N}=\rho(a)\rho(\hbox{Inder}(\frak
g))(m)=\rho(\hbox{Inder}(V)\rho(a)(m)=\lambda(a)\rho(\hbox{Inder}(V))(m)=\tilde{N}.$
Using (\ref{iso1}), we can conclude that $0 \neq \tilde{N}$ is a
subrepresentation of $M$ as a  $\wedge^{n-1}V$-module  but $M$ was irreducible by hypothesis
which is a contradiction. Part c) is an immediate consequence of
(\ref{iso1}) and Lemma \ref{lem2}.\end{proof}

\section{The simple linearly compact $n$-Lie algebra $W^n$}

We denote by $W^n$ the simple infinite-dimensional
linearly compact $n$-Lie superalgebra , whose
underlying vector space is the linearly compact vector space of formal power
series $\mathbb{F}[[x_1,\cdots,x_{n-1}]]$ endowed with the following
$n$-ary bracket:
\begin{equation}\label{n-bracket}[f_1, \cdots, f_n]=\hbox{det}\left(
  \begin{array}{cc}
  f_1\quad\cdots \quad f_n\\
    D_1(f_1)\, \cdots \,D_1(f_n) \\
    \cdots\cdots\cdots\cdots\cdots\cdots \cdots \\
    D_{n-1}(f_1)\, \cdots \,D_{n-1}(f_n) \\
  \end{array}
\right)\end{equation}
 where $D_i=\frac{\partial}{\partial x_i}.$\\

\begin{remark}\label{rmk1}  Consider the $n$-Lie algebra
$W^n$ endowed with the $n$-bracket (\ref{n-bracket}). Then, the map
$\hbox{ad}:\wedge^{n-1}W^n\rightarrow \hbox{Inder}(W^n),$ which
sends $f_1\wedge\cdots\wedge f_{n-1}\rightarrow
\hbox{ad}(f_1\wedge\cdots\wedge f_{n-1})$ is an isomorphism of Lie
algebras. Due to Proposition \ref{propsuryectivo ad} we only need to show that
$\hbox{Ker}(\hbox{ad})=\{0\}.$ Let $f_1\wedge\cdots\wedge f_{n-1} \in \hbox{Ker}(\hbox{ad}),$ then
\begin{equation}\label{determinant}\hbox{ad}(f_1\wedge\cdots\wedge f_{n-1})(f) =\hbox{det}\left(
  \begin{array}{cc}
  f_1\quad\cdots \quad f\\
    D_1(f_1)\, \cdots \,D_1(f) \\
    \cdots\cdots\cdots\cdots\cdots\cdots \cdots \\
    D_{n-1}(f_1)\, \cdots \,D_{n-1}(f) \\
  \end{array}
\right)=0,
\end{equation}
 for all $f\in \wedge^{n-1}W^n.$ But,  $\mathbb{F}[[x_1,
\cdots, x_{n-1}]]$ is infinite dimensional, we have that at least two $f_i$'s are
linearly dependent, which means that \linebreak $f_1\wedge\cdots\wedge f_{n-1}=0$.
\end{remark}

\

Denote $W(m,n)$ the Lie superalgebra of continuous derivations of the
tensor product $\mathbb{F}(m,n)$ of the algebra of formal power
series in $m$ even commuting variables $x_1,\dots, x_m$ and the Grassmann
algebra in $n$ anti-commuting odd variables $\xi_1,\dots,\xi_n$.
Elements of $W(m,n)$ can be viewed as linear differential operators
of the form
$$X=\sum_{i=1}^m P_i(x,\xi)\frac{\partial}{\partial x_i}+
\sum_{j=1}^n Q_j(x,\xi)\frac{\partial}{\partial \xi_j}, ~P_i,
Q_j\in\mathbb{F}(m,n).$$ The Lie superalgebra $W(m,n)$ is simple
linearly compact (and it is finite-dimensional if and only if
$m=0$). From now on, we will denoted the Lie algebras $W(n-1, 0)$ by $W_{n-1}.$

Proposition 5.1 in \cite{CK}, gives us the
description of the Lie algebra of continuous  derivation of each simple
linearly compact $n$-Lie algebra.
Moreover, they state in particular,  that the Lie algebra of continuous derivations of the
$n$-Lie algebra $ W^n$ is isomorphic to
$W_{n-1}$ and in the proof of this Proposition, they show
that the Lie algebra of continuous derivations of the $n$-Lie
algebra $ W^n$  coincides with the Lie
algebra of its inner derivations. Thus,
\begin{equation}\label{inder}
\hbox{Inder}(W^n)\simeq W_{n-1}.
\end{equation}
Therefore, Theorems \ref{Main theorem} and \ref{thm3} and Remark \ref{rmk1} gives us the following.

\begin{theorem}\label{th4} Irreducible representations of the $n$-Lie algebra
$W^n$ are in $1-1$ correspondence with irreducible representations of the
universal enveloping algebra $U(W_{n-1}),$ on which the two sided
ideal $Q(W^n)$, generated by the elements
$$x_{a_1,\cdots,a_{2n-2}}=\hbox{ad }([a_1,\cdots,a_n]\wedge a_{n+1}\wedge\cdots \wedge a_{2n-2} )$$
\begin{equation*}\label{eq2}-\sum_{i=1}^{n} (-1)^{i+n} \hbox{ad }(a_1\wedge \cdots \wedge \widehat{ a_i}\cdots \wedge a_n)\, \hbox{ad }(a_i\wedge a_{n+1}\wedge\cdots\wedge a_{2n-2})
\end{equation*}
acts trivially.
\end{theorem}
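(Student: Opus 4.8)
The plan is to assemble the structural results established above, the decisive simplification being that for $W^n$ one has $\hbox{Ker}(\hbox{ad})=\{0\}$. By Remark \ref{rmk1} the surjection of Proposition \ref{propsuryectivo ad} is then an isomorphism, so combining it with (\ref{inder}) yields the chain of Lie algebra isomorphisms
\[
\wedge^{n-1}W^n \;\simeq\; \hbox{Inder}(W^n) \;\simeq\; W_{n-1},
\]
with $\hbox{ad}$ realizing the first isomorphism explicitly. Because the kernel is trivial, the semidirect decomposition (\ref{iso1}) collapses and Theorem \ref{thm3} degenerates: there is no abelian scalar summand $\lambda\,Id$ to split off, so a representation of $\wedge^{n-1}W^n$ is literally the same datum as a representation of $W_{n-1}$.

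Next I would invoke Theorem \ref{Main theorem} to pass between $n$-Lie modules and Lie modules. Part $1)$ sends an $n$-Lie module $M$ over $W^n$ to the representation $\rho$ of $\wedge^{n-1}W^n$ determined by $\rho(a_1\wedge\cdots\wedge a_{n-1})(m)=[a_1,\cdots,a_{n-1},m]$; since this $\rho$ arises from a genuine $n$-Lie bracket, the identity (\ref{second Jacobi identity}), equivalently (\ref{eq3}), forces the generators (\ref{vanesa condition}) of $Q(W^n)$ to annihilate $M$. Conversely, Part $2)$ shows that any representation of $\wedge^{n-1}W^n$ on which $Q(W^n)$ acts trivially underlies an $n$-Lie module, the two constructions being mutually inverse by the defining formula (\ref{doble bracket}). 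Thus $n$-Lie modules of $W^n$ correspond bijectively to representations of $\wedge^{n-1}W^n$ killing $Q(W^n)$.

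I would then transport this correspondence along $\hbox{ad}$. A Lie algebra isomorphism $\wedge^{n-1}W^n\simeq W_{n-1}$ extends canonically to an isomorphism of universal enveloping algebras $U(\wedge^{n-1}W^n)\simeq U(W_{n-1})$, under which representations of one side are identified with representations of the other. The image of the two-sided ideal $Q(W^n)$ is the two-sided ideal generated by the $\hbox{ad}$-images of the generators (\ref{vanesa condition}); applying $\hbox{ad}$ through each factor turns $x_{a_1,\cdots,a_{2n-2}}$ into exactly the element displayed in the statement, namely $\hbox{ad}([a_1,\cdots,a_n]\wedge a_{n+1}\wedge\cdots\wedge a_{2n-2})$ minus the corresponding sum of products of $\hbox{ad}$'s. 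Hence the condition ``$Q(W^n)$ acts trivially'' carries the same meaning on both sides.

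Finally, Corollary \ref{corol1}(a) guarantees that $M$ is irreducible as an $n$-Lie module precisely when it is irreducible as a module over $\wedge^{n-1}W^n$, equivalently (via the isomorphism above) as a $U(W_{n-1})$-module. Combining the three identifications — the module correspondence of Theorem \ref{Main theorem}, the irreducibility transfer of Corollary \ref{corol1}, and the enveloping-algebra isomorphism induced by $\hbox{ad}$ — gives the asserted $1$-$1$ correspondence. I expect the only point requiring genuine care to be the bookkeeping of the third paragraph: verifying that the enveloping-algebra isomorphism carries the generators (\ref{vanesa condition}) of $Q(W^n)$ precisely onto the stated generators $x_{a_1,\cdots,a_{2n-2}}$, since $\hbox{ad}$ must be applied factor by factor to a product of elements of $\wedge^{n-1}W^n$ and the resulting multiplication now takes place in $U(W_{n-1})$ rather than inside $W^n$ itself.
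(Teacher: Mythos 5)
Your proposal is correct and follows essentially the same route as the paper, which derives Theorem \ref{th4} directly from Theorem \ref{Main theorem}, Theorem \ref{thm3}, Remark \ref{rmk1}, and the identification (\ref{inder}); you have simply spelled out the chain $\wedge^{n-1}W^n\simeq \hbox{Inder}(W^n)\simeq W_{n-1}$, the transport of $Q(W^n)$ along $\hbox{ad}$, and the irreducibility transfer via Corollary \ref{corol1}(a) that the paper leaves implicit. Your observation that Theorem \ref{thm3} degenerates when $\hbox{Ker}(\hbox{ad})=\{0\}$ is exactly the role it plays in the paper's one-line argument.
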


\section{Representations of simple linearly compact Lie superalgebra $W_{n-1}.$}

In this section we present the approach given by A.
Rudakov in \cite{R} for the representation theory
of the infinite-dimensional simple linearly compact Lie algebra
$W_{n-1}$.

Recall that any $D \in W_{n-1}$ has the form $D=\displaystyle{\sum_{i=1}^{n-1} f_i \partial/\partial x_i}$ with $f_i \in \mathbb{F}[[x_1,\cdots,x_{n-1}]].$
Consider the filtration
$$(W_{n-1})_{(j)}=\{D,\, \hbox{ deg }f_i \geq j+1 \}$$
of $W_{n-1}$,  such that the subspaces  $(W_{n-1})_{(j)}$ form a fundamental system
of neighborhood of zero. The corresponding gradation is
$$(W_{n-1})_{j}=\{D,\, \hbox{ deg }f_i= j+1 \}.$$
This gives a triangular decomposition
\begin{equation*}
W_{n-1}= (W_{n-1})_- \oplus (W_{n-1})_0 \oplus (W_{n-1})_+,\qquad
\end{equation*}
\hbox{ with } $(W_{n-1})_\pm=\oplus_{\pm m>0} (W_{n-1})_m.$
We shall consider continuous representations in spaces with discrete
{to\-po\-lo\-gy}. The continuity of a representation of a linearly
compact Lie superalgebra $W_{n-1}$  in a vector space $V$ with
discrete topology means that the stabilizer $(W_{n-1})_v=\{ g\in
W_{n-1}  |\, gv=0\}$ of any $v\in V$ is an open (hence of finite
codimension) subalgebra of $W_{n-1}$. Let $(W_{n-1})_{\geq
0}=(W_{n-1})_{
> 0} \oplus (W_{n-1})_0$. Denote by $P(W_{n-1}, (W_{n-1})_{\geq 0})$ the
category  of all continuous $W_{n-1}$-modules $V$, where $V$ is a
vector space with discrete topology, that are $(W_{n-1})_0$-locally
finite, that is any $v\in V$ is contained in a finite-dimensional
$(W_{n-1})_0$-invariant subspace. Given an $(W_{n-1})_{\geq 0}$-module $F$,
we may consider the {a\-sso\-cia\-ted} induced $W_{n-1}$-module
\begin{displaymath}
M(F)=\hbox{ Ind}^{W_{n-1}}_{(W_{n-1})_{\geq 0}}
F=U(W_{n-1})\otimes_{U((W_{n-1})_{\geq 0})} F
\end{displaymath}
called the {\it generalized Verma module} associated to $F$.

 Let $V$ be an $W_{n-1}$-module. The elements of the subspace
\begin{displaymath}
\hbox{ Sing}(V):=\{ v\in V|\, (W_{n-1})_{>0} v=0\}
\end{displaymath}
are called {\it singular vectors}. When $V=M(F)$, the $(W_{n-1})_{\geq0}$-module $F$ is canonically an
$(W_{n-1})_{\geq0}$-submodule of $M(F)$, and Sing$(F)$ is a subspace
of Sing$(M(F))$, called the {\it subspace of trivial singular
vectors}. Observe that $M(F)= F\oplus F_+$, where
$F_+=U_+((W_{n-1})_-)\otimes F$ and $U_+((W_{n-1})_-)$ is the
augmentation ideal in the symmetric algebra $U((W_{n-1})_-)$. Then
\begin{displaymath}
\hbox{ Sing}_+(M(F)):=\hbox{ Sing}(M(F))\cap F_+
\end{displaymath}
are called the {\it non-trivial singular vectors}.

\begin{theorem}
\label{prop6} \cite{KR1}\cite{R}
 (a) If $F$ is a finite-dimensional
$(W_{n-1})_{\geq0}$-module, then  $M(F)$ is in $P(W_{n-1},(W_{n-1})_{\geq
0})$.

(b) In any irreducible finite-dimensional $(W_{n-1})_{\geq0}$-module
$F$ the subalgebra $(W_{n-1})_+$ acts trivially.

(c) If $F$ is an irreducible finite-dimensional
$(W_{n-1})_{\geq0}$-module, then $M(F)$ has a unique maximal submodule.

(d) Denote by I(F) the quotient  by the unique maximal submodule of
$M(F)$. Then the map $F\mapsto I(F)$ defines a bijective
correspondence between irreducible finite-dimensional
$(W_{n-1})_{\geq0}$-modules and irreducible $(W_{n-1})$-modules in
$P((W_{n-1}),(W_{n-1})_{\geq0})$, the inverse map being $V\mapsto $Sing$(V)$.

(e) An $(W_{n-1})_{\geq0}$-module $M(F)$ is irreducible if and only if the
$(W_{n-1})_{\geq0}$-module $F$ is irreducible and $M(F)$ has no
non-trivial singular vectors.
\end{theorem}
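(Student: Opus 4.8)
The plan is to exploit the structure of $\fg:=W_{n-1}$ as a $\ZZ$-graded Lie algebra $\fg=\bigoplus_{j\ge-1}\fg_j$, $\fg_j=(W_{n-1})_j$, which is transitive, has reductive zero-part $\fg_0\cong\mathfrak{gl}_{n-1}$, abelian bottom $\fg_{-1}=\langle\partial/\partial x_1,\dots,\partial/\partial x_{n-1}\rangle$, and contains the Euler element $E=\sum_i x_i\,\partial/\partial x_i\in\fg_0$ with $[E,g]=m\,g$ for $g\in\fg_m$. By PBW, $M(F)\cong S(\fg_{-1})\otimes F$ as vector spaces, so it is $\ZZ$-graded, $M(F)=\bigoplus_{k\ge0}M(F)_{-k}$ with $M(F)_{-k}=S^k(\fg_{-1})\otimes F$ and top piece $M(F)_0=F$; each graded piece is finite-dimensional and $\fg_0$-stable. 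For part (a) this already yields $\fg_0$-local finiteness and discreteness; and since $\fg_m$ raises degree while $M(F)$ vanishes in positive degree, a homogeneous vector of degree $-k$ is killed by $(W_{n-1})_{(k+1)}=\bigoplus_{m\ge k+1}\fg_m$, so every vector has open stabilizer and $M(F)\in P(W_{n-1},(W_{n-1})_{\ge0})$.

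For (b), let $F=\bigoplus_\lambda F^{(\lambda)}$ be the generalized $E$-eigenspace decomposition of the finite-dimensional $F$. Since $\fg_0$ commutes with $E$ it preserves each $F^{(\lambda)}$, and from $[E,g]=mg$ one checks $\fg_m\,F^{(\lambda)}\subseteq F^{(\lambda+m)}$. Choosing $\mu$ of maximal real part, $F^{(\mu)}$ is $\fg_0$-stable and annihilated by $\fg_+$, hence a nonzero $\fg_{\ge0}$-submodule; irreducibility forces $F^{(\mu)}=F$ and $\fg_+F=0$. For (c) and (e), observe that by (b) $F$ is an irreducible $\fg_0$-module, so by Schur $E$ acts on it by a scalar $\mu_0$; thus $E$ acts on $M(F)_{-k}$ by $\mu_0-k$, and the $\ZZ$-grading of $M(F)$ is exactly its (now genuine) $E$-eigenspace decomposition. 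Every submodule, being $E$-stable, is therefore graded. A proper graded submodule $N$ satisfies $N_0=0$, for $N_0$ is a $\fg_{\ge0}$-submodule of $F$ and $N_0=F$ would force $N\supseteq U(\fg_-)F=M(F)$; hence the sum of all proper submodules avoids $M(F)_0$ and is the unique maximal submodule, proving (c). For (e): if $M(F)$ is irreducible then $F$ is irreducible (otherwise $U(\fg_-)F'$ is proper for $F'\subsetneq F$) and there are no nontrivial singular vectors (each would generate a proper submodule); conversely, if $F$ is irreducible with no nontrivial singular vectors and $N\ne0$ is the maximal submodule, then its top nonzero graded piece $N_{-k_0}$ ($k_0\ge1$) consists of singular vectors, since $\fg_m$ ($m>0$) maps it into $N_{-k_0+m}=0$, contradicting the hypothesis; so $N=0$ and $M(F)=I(F)$ is irreducible.

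For (d), the map $F\mapsto I(F)$ lands in irreducible objects of $P$, being a quotient of $M(F)$. The substance is the inverse. Given irreducible $V\in P$, one first produces a nonzero singular vector; then $\mathrm{Sing}(V)=\{v:\fg_{>0}v=0\}$ is a $\fg_0$-submodule on which $\fg_{>0}$ acts trivially (for singular $v$ and $g_0\in\fg_0$, $\fg_m(g_0 v)=[\fg_m,g_0]v=0$), hence, by $\fg_0$-local finiteness, contains an irreducible finite-dimensional $\fg_0$-module $F$, which is automatically an irreducible $\fg_{\ge0}$-module. Frobenius reciprocity then gives a nonzero $\fg$-homomorphism $M(F)\to V$, surjective by irreducibility of $V$, so $V\cong I(F)$. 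Finally $\mathrm{Sing}(I(F))=F$: a hypothetical nontrivial singular vector lies in negative degree and generates, through $U(\fg_-)U(\fg_0)$, a submodule missing the top piece $F$, impossible in the irreducible $I(F)$. Thus $F\mapsto I(F)$ and $V\mapsto\mathrm{Sing}(V)$ are mutually inverse.

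I expect the main obstacle to be the existence of a nonzero singular vector—and the finiteness needed to extract a finite-dimensional irreducible $F$ from $\mathrm{Sing}(V)$—for an arbitrary irreducible $V\in P$ in part (d). Unlike the purely graded arguments above, this forces one to combine both defining properties of $P$: continuity, i.e.\ open stabilizers so that $\fg_m v=0$ for $m\gg0$, together with $\fg_0$-local finiteness, which supplies an $E$-eigenspace analysis allowing one to locate a vector annihilated by all of $\fg_{>0}$. This is the technical heart of the statement, for which we rely on \cite{KR1} and \cite{R}.
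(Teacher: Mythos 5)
The paper contains no proof of this theorem to compare against: it is imported verbatim from Rudakov \cite{R} and Kac--Rudakov \cite{KR1}, so your proposal has to stand on its own. Much of it does. The generalized $E$-eigenspace argument for (b), the proof in (c) that every submodule is $E$-stable hence graded and that proper graded submodules miss the top piece (valid because (b) gives $\fg_+F=0$, which is exactly what makes the $\ZZ$-grading of $M(F)$ a module grading and lets $E$ separate the graded pieces), and both directions of (e) are correct and are the standard arguments.

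Two gaps remain. The lesser one is in (a): the claim that $\fg_m$ raises degree is false for a general finite-dimensional $(W_{n-1})_{\geq 0}$-module $F$. Writing $M(F)_{-k}=S^k(\fg_{-1})\otimes F$, an element $g\in\fg_m$ with $m>0$ sends $1\otimes f$ to $1\otimes gf$, which lies in degree $0$, not in degree $m$; the grading is respected by the action only when $\fg_+F=0$, which in (a) you cannot assume (it is the conclusion of (b) for irreducible $F$). Consequently a degree-$(-k)$ vector need not be killed by $(W_{n-1})_{(k+1)}$: if $\fg_1$ acts nontrivially on $F$ (such $F$ exist, e.g.\ pulled back from $(W_{n-1})_{\geq 0}/(W_{n-1})_{(2)}$), the vector $1\otimes f$ already violates this for suitable $f$. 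The repair is to use the continuity of $F$ --- an implicit hypothesis, without which (a) is simply false --- namely $\fg_{(j)}F=0$ for some $j$, after which a commutator computation shows that degree-$(-k)$ vectors are killed by $\fg_{(j+k)}$, which is all that openness of stabilizers requires. The serious gap is in (d): the whole content of the bijectivity is the existence of a nonzero singular vector in an arbitrary irreducible $V\in P(W_{n-1},(W_{n-1})_{\geq 0})$, and this you explicitly defer to \cite{KR1} and \cite{R}. None of the tools you set up produce it: continuity gives each vector a finite height $k$ with $\fg_{(k)}v=0$, and $\fg_0$-local finiteness gives finite-dimensional $E$-eigenspace control, but passing from finite height to height one (a vector killed by all of $(W_{n-1})_{>0}$) is a genuine induction exploiting the structure of $W_{n-1}$, and it is precisely this lemma that makes the statement a theorem worth citing rather than a formal exercise. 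So, as written, your proposal proves the formal parts and cites the references for the essential one.
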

\begin{remark}\label{rmk3} (a) Note that
\begin{equation}(W_{n-1})_0\cong \frak
gl_{n-1}(\mathbb{F}),\end{equation}  the isomorphism is given by the
map that sends $x_i \partial/\partial x_j \rightarrow E_{i,j}$ where
$E_{i,j}$ denote as is usual the matrix whose $(i,j)$ entry is $1$
and all the other entries are $0$ for  $i, j =1, \cdots, n-1.$

(b) Due to Theorem \ref{prop6} part b)   any irreducible finite dimensional
$(W_{n-1})_{\geq0}$-module $F$ will be obtained extending by zero
the irreducible finite dimensional $\frak {gl}_{n-1}$- module.

\end{remark}
 In the
Lie algebra $\frak{gl}_{n-1}(\mathbb{F})$ we choose the Borel subalgebra \linebreak$\frak b=
\{x_i \partial/ \partial x_j : i<j,\, i, j =1,\cdots, n-1\}.$ We
denote by
$$\frak h=\{x_i
\partial/ \partial x_i, \, i=1,\cdots, n-1\} $$ the Cartan subalgebra corresponding to $\frak b.$

Let $F^1, \cdots F^{n-1}$ be the irreducible $(W_{n-1})_{\geq
0}$-modules irreducibles obtained by extending trivially the irreducible
$\frak {gl}_{n-1}$-modules with highest weight $\lambda^1=(0, \cdots,
-1),$ $\lambda^2=(0, \cdots, -1, -1),\cdots, \lambda^{n-1}=(-1, \cdots,
-1, -1) $ respectively. We will call them
\textit{exceptional }\,$(W_{n-1})_{\geq 0}$-modules.

\begin{theorem}\cite{R}\label{th6} If $F$ is an irreducible finite dimensional $\frak
{gl}_{n-1}$-module which coincides with none of the exceptional
modules $F^1, \cdots F^{n-1}$ then the $W_{n-1}$-module $M(F)$ is
irreducible. Each module $N^p=M(F^p)$ contains a unique irreducible
submodule $K^p$ which is generated by all its non-trivial singular
vectors.
\end{theorem}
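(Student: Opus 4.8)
The plan is to reduce the entire statement to a computation of singular vectors through Theorem \ref{prop6}. Since $F$ is irreducible, part (e) of that theorem says that $M(F)$ is irreducible exactly when it has no non-trivial singular vectors. Thus the whole theorem follows from one key assertion: $\hbox{Sing}_+(M(F))\neq 0$ if and only if $F\cong F^p$ for some $p\in\{1,\dots,n-1\}$, and in that case $\hbox{Sing}_+(M(F^p))$ is an irreducible $(W_{n-1})_0$-module. Granting this, the first statement is immediate --- a non-exceptional $F$ has $\hbox{Sing}_+(M(F))=0$, so $M(F)$ is irreducible by Theorem \ref{prop6}(e) --- and the existence and uniqueness of $K^p$ will follow from a submodule argument.

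To compute singular vectors I would first record the structure of $M(F)$. By PBW and the fact that $(W_{n-1})_-=(W_{n-1})_{-1}$ is abelian with basis $\partial_1,\dots,\partial_{n-1}$, one has $M(F)\cong U((W_{n-1})_{-1})\otimes F$, graded so that $F$ sits in degree $0$ and each $\partial_i$ has degree $-1$; here $(W_{n-1})_{-1}\cong F^1$ and $(W_{n-1})_0\cong\mathfrak{gl}_{n-1}$ as in Remark \ref{rmk3}. Since $(W_{n-1})_+$ is generated as a Lie algebra by $(W_{n-1})_1$, a vector is singular if and only if it is annihilated by $(W_{n-1})_1$. Writing a homogeneous element of degree $-k$ as a sum of terms $\partial_{i_1}\cdots\partial_{i_k}\otimes w$ with $w\in F$, and using $[(W_{n-1})_1,(W_{n-1})_{-1}]\subseteq(W_{n-1})_0$ together with Theorem \ref{prop6}(b) (so that $(W_{n-1})_+$ kills $F$), the condition $(W_{n-1})_1\cdot v=0$ turns into a finite linear system on the $F$-coefficients of $v$, written purely in terms of the $\mathfrak{gl}_{n-1}$-action on $F$.

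The core of the argument is then twofold. The main obstacle is a degree bound: I would show that a non-trivial singular vector must be homogeneous of degree $-1$, i.e. must lie in $(W_{n-1})_{-1}\otimes F$. The idea is to take the component of a singular vector in the most negative degree $-k_0$, apply carefully chosen elements of $(W_{n-1})_1$, and exploit weight constraints coming from the finite-dimensionality of $F$ to force $k_0=1$; this is where the depth-one nature of the grading of $W_{n-1}$ is essential, and it is the step most likely to demand a delicate induction on degree. The second part is to solve the resulting degree $-1$ system: the $\mathfrak{gl}_{n-1}$-submodule of $(W_{n-1})_{-1}\otimes F$ that it cuts out is nonzero precisely when the highest weight of $F$ is one of $\lambda^1,\dots,\lambda^{n-1}$, and in that case it is a single irreducible component. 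Conceptually these singular vectors are the symbol of the de Rham differential relating the spaces of $p$-forms, which is exactly why the exceptional list consists of $F^1,\dots,F^{n-1}$.

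Finally I would extract the submodule structure of $N^p=M(F^p)$. Let $K^p$ be the submodule generated by $\hbox{Sing}_+(N^p)$. Because $N^p$ is graded in non-positive degrees, any nonzero submodule $S$ has a component of maximal degree, and a nonzero vector there is annihilated by $(W_{n-1})_{>0}$ (which strictly raises degree), hence is singular. If $S$ is proper then its intersection with the degree-$0$ part is zero --- otherwise, being $(W_{n-1})_0$-stable and $F^p$ irreducible, $S$ would contain the degree-$0$ component $F^p$, which generates $N^p$, forcing $S=N^p$ --- so this singular vector is non-trivial and lies in $\hbox{Sing}_+(N^p)$. As the latter is an irreducible $(W_{n-1})_0$-module, $S$ must contain all of it, whence $S\supseteq K^p$. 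Therefore $K^p$ is the unique minimal nonzero submodule; in particular it has no proper nonzero submodule, so it is irreducible, and it is the unique irreducible submodule of $N^p$. Identifying $K^p$ with the image of the de Rham-type homomorphism from the neighbouring generalized Verma module provides an alternative and more explicit verification of its irreducibility.
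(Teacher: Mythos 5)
First, a point of comparison: the paper does not prove Theorem \ref{th6} at all --- it is quoted from Rudakov \cite{R} --- so your proposal can only be measured against Rudakov's original argument, and your outline does follow its architecture: reduce to the classification of non-trivial singular vectors via Theorem \ref{prop6}(e), bound the degree in which such vectors can live, solve the resulting $\frak{gl}_{n-1}$-linear system in degree $-1$, and recognize the answer as the symbol of the de Rham differential. The difficulty is that, as a proof, everything of substance is deferred. The two assertions that carry the entire content of the theorem --- (i) $\hbox{Sing}_+(M(F))\neq 0$ forces $F\cong F^p$, with non-trivial singular vectors confined to degree $-1$, and (ii) $\hbox{Sing}_+(M(F^p))$ is an irreducible $(W_{n-1})_0$-module --- are announced (``I would show'', ``the step most likely to demand a delicate induction'') but never carried out: no elements of $(W_{n-1})_1$ are chosen, no linear system is written down or solved, and no weight argument is given. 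That degeneracy analysis is precisely the hard part of Rudakov's theorem, so what you have is a correct plan with the proof missing from its middle.

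Second, one step fails as written even if (i) and (ii) are granted: in the uniqueness argument you take ``a component of maximal degree'' of a nonzero submodule $S\subseteq N^p$ and declare a nonzero vector there to be singular. Submodules of a graded module need not be graded, so the top homogeneous component of an element of $S$ need not belong to $S$. This is repairable in two standard ways, one of which should be stated explicitly. Either observe that the Euler element $e=\sum_{i=1}^{n-1}x_i\,\partial/\partial x_i\in (W_{n-1})_0$ acts on the degree $-k$ piece of $M(F)$ by the scalar $\bigl(\sum_i\lambda_i\bigr)-k$, so the graded pieces are eigenspaces of $e$ with pairwise distinct eigenvalues and every submodule is automatically graded; or avoid gradedness of $S$ altogether by using that $(W_{n-1})_{>0}$ strictly raises degree and degrees in $M(F)$ are bounded above, so for $0\neq v\in S$ a nonzero product $g_1\cdots g_m v$ with $g_i\in (W_{n-1})_{>0}$ of maximal length $m\geq 0$ is a nonzero singular vector lying in $S$. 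With that repair, and with (i) and (ii) actually proved, your extraction of $K^p$ as the unique minimal --- hence unique irreducible --- submodule generated by $\hbox{Sing}_+(N^p)$ is correct.
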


\begin{corollary}\cite{R} \label{corol3} If the $W_{n-1}$-module $E$ is irreducible, then
the $\frak{gl}_{n-1}(\mathbb{F})$-module $F:=\hbox{Sing}(E)$ is also
irreducible. If $F$ coincides with none of the {mo\-du\-les}
$F^1,\cdots , F^{n-1},$ then $E=M(F).$ If $F=F^p,$ then $E$ is
isomorphic to $J(F^p)=N^p/\hbox{ Sing}_+(M(F^p))$.
\end{corollary}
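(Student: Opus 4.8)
The plan is to obtain all three assertions directly from the two structural results already in hand---Theorem~\ref{prop6}, the Kac--Rudakov induction-and-duality package, and Theorem~\ref{th6}, Rudakov's description of the modules $M(F)$---so that no new representation-theoretic computation is required. The only genuine work lies in the exceptional case, where the abstractly defined ``unique maximal submodule'' of Theorem~\ref{prop6}(c) must be matched with the concretely described submodule $K^p$ of Theorem~\ref{th6}.

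First I would prove that $F=\mathrm{Sing}(E)$ is irreducible. By Theorem~\ref{prop6}(d) the map $F\mapsto I(F)$ is a bijection from irreducible finite-dimensional $(W_{n-1})_{\geq0}$-modules onto the irreducible objects of $P(W_{n-1},(W_{n-1})_{\geq0})$, with inverse $V\mapsto\mathrm{Sing}(V)$. Since $E$ is irreducible, $\mathrm{Sing}(E)$ is therefore an irreducible finite-dimensional $(W_{n-1})_{\geq0}$-module and, moreover, $E=I(\mathrm{Sing}(E))$. By Theorem~\ref{prop6}(b) the subalgebra $(W_{n-1})_+$ acts trivially on it, so by Remark~\ref{rmk3} it is simply an irreducible $\mathfrak{gl}_{n-1}(\mathbb{F})$-module. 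This establishes the first claim and, just as importantly, records the identity $E=I(F)$ that powers the remaining two cases.

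If $F$ is none of the exceptional modules $F^1,\dots,F^{n-1}$, then Theorem~\ref{th6} asserts that $M(F)$ is already irreducible; hence the unique maximal submodule furnished by Theorem~\ref{prop6}(c) is zero, and $I(F)=M(F)$. Combined with $E=I(F)$, this yields $E=M(F)$, which is the second claim.

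The exceptional case $F=F^p$ is the heart of the argument. Here $E=I(F^p)=N^p/R$, where $R$ is the unique maximal submodule of $N^p=M(F^p)$ provided by Theorem~\ref{prop6}(c), so everything reduces to identifying $R$ with the submodule $K^p$ that Theorem~\ref{th6} describes as generated by all non-trivial singular vectors. I would use the grading standard for induced modules: writing $N^p=U((W_{n-1})_{-1})\otimes F^p$ with $(W_{n-1})_{-1}$ abelian and $F^p$ in top degree, any proper submodule meets $F^p$ in zero (else it contains the cyclic generator and is all of $N^p$), hence lives in strictly negative degree, so its highest layer is killed by $(W_{n-1})_{>0}$ and consists of non-trivial singular vectors. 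Thus, on one hand $\langle\mathrm{Sing}_+(M(F^p))\rangle$ is proper and so contained in $R$; on the other hand, by the standard fact that the maximal submodule of such a module is generated by its non-trivial singular vectors, $R\subseteq\langle\mathrm{Sing}_+(M(F^p))\rangle$. Hence $R=\langle\mathrm{Sing}_+(M(F^p))\rangle=K^p$ and $E=N^p/K^p=J(F^p)$. The main obstacle is precisely this identification $R=K^p$: one must know both the generation fact and, via Theorem~\ref{th6}, that for these de Rham--type weights the non-trivial singular vectors span a single irreducible $K^p$, so that $K^p$ is simultaneously the unique minimal and the unique maximal proper submodule and $N^p$ has length two. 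For a general graded Lie algebra the maximal submodule need not be irreducible, so this is exactly where the special geometry of $W_{n-1}$ recorded in Theorem~\ref{th6} is indispensable.
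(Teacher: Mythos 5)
The paper itself offers no proof of this corollary: it is quoted verbatim from Rudakov \cite{R}, so your attempt must stand on what can actually be extracted from Theorem~\ref{prop6} and Theorem~\ref{th6}. Your first two claims do follow exactly as you say: Theorem~\ref{prop6}(d) gives that $F=\mathrm{Sing}(E)$ is an irreducible finite-dimensional $(W_{n-1})_{\geq 0}$-module and that $E=I(F)$, Theorem~\ref{prop6}(b) and Remark~\ref{rmk3} identify $F$ with a $\frak{gl}_{n-1}(\mathbb{F})$-module, and in the non-exceptional case Theorem~\ref{th6} makes $M(F)$ irreducible, so $I(F)=M(F)=E$. (One small fixable point: your ``highest layer'' argument presumes submodules are graded; this needs the Euler element $\sum_i x_i\partial/\partial x_i$, which acts on the degree $-k$ component of $M(F)$ by pairwise distinct scalars, forcing every submodule to be graded.)

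The genuine gap is in the exceptional case, at the inclusion $R\subseteq K^p$. You justify it by a ``standard fact that the maximal submodule of such a module is generated by its non-trivial singular vectors.'' This is not a standard fact, and it is false in general: already for Verma modules over finite-dimensional semisimple Lie algebras of rank at least two, maximal submodules need not be generated by singular vectors. Nor can it be extracted from Theorem~\ref{th6}: that theorem says $K^p$ is the unique \emph{irreducible} submodule of $N^p$ and is generated by all non-trivial singular vectors. Combined with your grading argument this yields only that $K^p$ is the unique \emph{minimal} nonzero submodule (every nonzero submodule contains a nonzero non-trivial singular vector, hence meets $K^p$, hence contains $K^p$ by irreducibility). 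It does not exclude a chain $0\subset K^p\subset R\subset N^p$ of length three: for a submodule $S$ strictly between $K^p$ and $N^p$, the top graded layer of $S/K^p$ is annihilated by $(W_{n-1})_{>0}$ only \emph{modulo} $K^p$, so one obtains singular vectors of $N^p/K^p$, not of $N^p$, and no contradiction with Theorem~\ref{th6}. In other words, the assertion you need---that $N^p/K^p$ is irreducible, i.e.\ that $N^p$ has length two---is precisely the nontrivial content of the corollary in the exceptional case; it is established in \cite{R} by Rudakov's explicit analysis of the degenerate induced modules (the de Rham complex picture), and it cannot be formally deduced from the two theorems quoted in the paper. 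Your closing sentence gestures at this dependence, but appealing to Theorem~\ref{th6} does not close the gap, because Theorem~\ref{th6} simply does not assert length two.
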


\subsection{Some useful lemmas}
 Let $F$ be an irreducible finite dimensional $\frak
{gl}_{n-1}$-module with highest weight vector $v_{\lambda}$ and highest weight $\lambda.$ Let $J(F)=M(F)/\hbox{ Sing}_+(M(F)).$

Our main goal is to find those  irreducible finite dimensional \linebreak
$\frak{gl}_{n-1}(\mathbb{F})$-modules $F$ for which $J(F)$ is  an irreducible module
over the $n$-Lie algebra $W^n$, more precisely, we are looking for those $J(F)$ where the ideal $Q(W^n)$ acts trivially.
\begin{lemma}\label{lem3}$Q(W^n)\otimes_{U(W_{n-1})_{\geq 0}} F \subset \hbox{ Sing}_+(M(F)) $
 if and only if $Q(W^n)$ acts trivially on $\,J(F).$\end{lemma}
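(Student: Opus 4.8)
The plan is to collapse the stated equivalence to a single subspace containment inside $M(F)$, exploiting that $Q(W^n)$ is a \emph{two-sided} ideal of $U(W_{n-1})$ and that $M(F)$ is cyclic over $U(W_{n-1})$, generated by the canonical copy $1\otimes F$ of $F$. First I would pin down the left-hand side: the inclusion $Q(W^n)\hookrightarrow U(W_{n-1})$ sends $q\otimes f\mapsto q\cdot(1\otimes f)$, so $Q(W^n)\otimes_{U((W_{n-1})_{\geq 0})}F$ is identified with the subspace $Q(W^n)\cdot(1\otimes F)\subseteq M(F)$. This is well posed because $Q(W^n)$, being a two-sided ideal, is in particular a right $U((W_{n-1})_{\geq 0})$-submodule of $U(W_{n-1})$. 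Hence the left condition reads $Q(W^n)\cdot(1\otimes F)\subseteq\text{Sing}_+(M(F))$.

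The crux is the identity $Q(W^n)\cdot M(F)=Q(W^n)\cdot(1\otimes F)$. Since $M(F)=U(W_{n-1})\cdot(1\otimes F)$, we have $Q(W^n)\cdot M(F)=\big(Q(W^n)\,U(W_{n-1})\big)\cdot(1\otimes F)$, and the two-sided ideal property gives $Q(W^n)\,U(W_{n-1})\subseteq Q(W^n)$, whence $Q(W^n)\cdot M(F)\subseteq Q(W^n)\cdot(1\otimes F)$; the reverse containment is immediate from $1\otimes F\subseteq M(F)$.

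To finish I would translate the right-hand side: because $J(F)=M(F)/\text{Sing}_+(M(F))$, the ideal $Q(W^n)$ acts trivially on $J(F)$ precisely when $q\cdot m\in\text{Sing}_+(M(F))$ for every $q\in Q(W^n)$ and $m\in M(F)$, that is, when $Q(W^n)\cdot M(F)\subseteq\text{Sing}_+(M(F))$. By the identity of the previous paragraph this is the same as $Q(W^n)\cdot(1\otimes F)\subseteq\text{Sing}_+(M(F))$, i.e. the left-hand condition, which settles both implications at once. The only real obstacle here is notational bookkeeping rather than depth: one must read the tensor-product subspace on the left as $Q(W^n)\cdot(1\otimes F)$ inside $M(F)$, and one must use the two-sided (not merely left) ideal structure of $Q(W^n)$ to propagate the action from the generating subspace $1\otimes F$ out to all of $M(F)$. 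Once these two points are in place the equivalence is formal.
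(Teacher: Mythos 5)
Your proof is correct and takes essentially the same route as the paper's: both arguments rest on $Q(W^n)$ being a two-sided ideal of $U(W_{n-1})$ and on $M(F)$ being generated over $U(W_{n-1})$ by $1\otimes F$. Your single identity $Q(W^n)\cdot M(F)=Q(W^n)\cdot(1\otimes F)$, obtained from $Q(W^n)\,U(W_{n-1})\subseteq Q(W^n)$, merely streamlines what the paper does in two separate implications (its converse direction instead passes through the submodule generated by $Q(W^n)\otimes_{U((W_{n-1})_{\geq 0})}F$ and the fact that $\hbox{Sing}_+(M(F))$ is the relevant submodule of $M(F)$).
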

 \begin{proof} Suppose $Q(W^n)$ acts trivially on $J(F).$ Note that by Theorem \ref{th6} and Corollary \ref{corol3} this means
 that
 $Q(W^n)\cdot(U(W_{n-1})\otimes_{U(W_{n-1})_{\geq 0}} F) \subset \hbox{
 Sing}_+(M(F)).$

 Since $Q(W^n)$ is a two sided ideal,  we have that
  $Q(W^n)\otimes_{(W_{n-1})_{\geq 0}}F\subset U(W_{n-1})Q(W^n)\otimes_{(W_{n-1})_{\geq
 0}}F + Q(W^n)\otimes_{(W_{n-1})_{\geq 0}}F$ $= Q(W^n) (U(W_{n-1})\otimes_{U(W_{n-1})_{\geq
 0}}F)\subseteq \hbox{Sing}_{+}M(F)$.

 Reciprocally if $Q(W^n)\otimes_{U(W_{n-1})_{\geq 0}} F \subset \hbox{
 Sing}_+(M(F)),$ it is enough to show that  $U(W_{n-1})Q(W^n)\otimes_{U(W_{n-1})_{\geq 0}} F \subset \hbox{Sing}_{+}(M(F)).$

 Note that $U(W_{n-1})(Q(W^n)\otimes_{U(W_{n-1})_{\geq 0}} F)$ is the submodule generated by $Q(W^n)\otimes_{U(W_{n-1})_{\geq 0}} F$
 and $Q(W^n)\otimes_{U(W_{n-1})_{\geq 0}} F \subset \hbox{
 Sing}_+(M(F))$ by hypothesis, thus we have
 $U(W_{n-1})Q(W^n)\otimes_{U(W_{n-1})_{\geq 0}} F $ is a subset  of the unique irreducible submodule $\hbox{Sing}_{+}(M(F))$ of $M(F)$,
 therefore $Q(W^n)$ acts trivially on $J(F).$
 \end{proof}

\begin{lemma}\label{lem4}$Q(W^n)\otimes_{U(W_{n-1})_{\geq 0}} v_{\lambda} \subset \hbox{ Sing}_+(M(F)) $
 if and only if $\,Q(W^n)$ acts trivially on $J(F).$\end{lemma}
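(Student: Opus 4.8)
The plan is to deduce this statement from the previous Lemma \ref{lem3}, which already equates triviality of the $Q(W^n)$-action on $J(F)$ with the containment $Q(W^n)\otimes_{U(W_{n-1})_{\geq 0}} F \subset \hbox{Sing}_+(M(F))$. Granting that, it suffices to prove the purely internal equivalence
$$Q(W^n)\otimes_{U(W_{n-1})_{\geq 0}} v_\lambda \subset \hbox{Sing}_+(M(F)) \quad\Longleftrightarrow\quad Q(W^n)\otimes_{U(W_{n-1})_{\geq 0}} F \subset \hbox{Sing}_+(M(F)).$$
Conceptually, this says that to test whether the ideal kills $J(F)$ one only needs to test it on the single highest weight vector $v_\lambda$ rather than on all of $F$, which is exactly what makes the condition checkable in practice.

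One direction is immediate: since $v_\lambda\in F$, the containment on all of $F$ restricts to the containment on $v_\lambda$, and Lemma \ref{lem3} then supplies the implication from triviality on $J(F)$ to the statement for $v_\lambda$.

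For the converse — the substantive direction — I would combine two facts. First, $F$ is an irreducible finite-dimensional $(W_{n-1})_0$-module, and $(W_{n-1})_0\cong\frak{gl}_{n-1}$, so $F=U((W_{n-1})_0)\,v_\lambda$; thus every $f\in F$ can be written $f=u\,v_\lambda$ with $u\in U((W_{n-1})_0)\subset U((W_{n-1})_{\geq 0})$. Second, $Q(W^n)$ is a two-sided ideal of $U(W_{n-1})$. Taking a generator $q\otimes f$ of $Q(W^n)\otimes_{U(W_{n-1})_{\geq 0}} F$ and using the defining relation of the tensor product over $U((W_{n-1})_{\geq 0})$ to move the scalar $u$ across, I would rewrite
$$q\otimes f = q\otimes(u\,v_\lambda) = (qu)\otimes v_\lambda .$$
Because $Q(W^n)$ is stable under right multiplication, $qu\in Q(W^n)$, so $(qu)\otimes v_\lambda\in Q(W^n)\otimes_{U(W_{n-1})_{\geq 0}} v_\lambda\subset\hbox{Sing}_+(M(F))$ by hypothesis. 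As such elements span $Q(W^n)\otimes_{U(W_{n-1})_{\geq 0}} F$, the full containment follows, and Lemma \ref{lem3} then gives that $Q(W^n)$ acts trivially on $J(F)$.

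The main (and only mildly delicate) point is the identity $q\otimes(u\,v_\lambda)=(qu)\otimes v_\lambda$: I must verify that the element $u$ being transported across the tensor genuinely lies in $U((W_{n-1})_{\geq 0})$ — which is where the grading $(W_{n-1})_0\subset (W_{n-1})_{\geq 0}$ is used — and that it is precisely right-ideal stability of $Q(W^n)$ that keeps $qu$ inside the ideal. Once these two bookkeeping facts are secured, the equivalence is formal and the lemma follows.
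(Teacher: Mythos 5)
Your proof is correct and takes essentially the same route as the paper's: the paper also reduces everything to Lemma \ref{lem3} and handles the substantive direction by invoking exactly your two facts, namely that $F$ is a highest weight $\frak{gl}_{n-1}(\mathbb{F})$-module (so $F=U(\frak{gl}_{n-1}(\mathbb{F}))\,v_\lambda$) and that $\frak{gl}_{n-1}(\mathbb{F})\subseteq U(W_{n-1})_{\geq 0}$, which is what licenses the rewriting $q\otimes u\,v_\lambda=(qu)\otimes v_\lambda$ with $qu\in Q(W^n)$ by two-sidedness of the ideal. The only difference is that you spell out in full the step the paper dismisses as ``immediate.''
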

\begin{proof}Due to Lemma \ref{lem3}  we only need to show that  $Q(W^n)\otimes_{U(W_{n-1})_{\geq 0}} v_{\lambda} \subset \hbox{ Sing}_+(M(F))$  implies that  $\,Q(W^n)$ acts trivially on $J(F).$ It is immediate from the definition of generalized Verma module and the facts that $F$ is a highest weight $\frak
{gl}_{n-1}(\mathbb{F})$-module and $\frak
{gl}_{n-1}(\mathbb{F})\subseteq U(W_{n-1})_{\geq 0}$.
\end{proof}
\subsection{Description of the ideal $Q(W^n)$}
Recall that $\hbox{Inder}(W^n)\simeq
W_{n-1},$ where the isomorphism is given explicitly by
 \begin{equation}\label{iso2} \hbox{ad}(f_1\wedge\cdots\wedge f_{n-1})\longrightarrow \sum_{i=1}^{n-1}(-1)^{n+1-i}\hbox{ det} \left(
\begin{array}{cc}
  f_1\quad\cdots \quad f_{n-1}\\
    D_1(f_1)\, \cdots \,D_1(f_{n-1}) \\
    \cdots\cdots\cdots\cdots\cdots\cdots \cdots \\
    \hat{D_i}(f_1)\, \cdots \,\hat{D_i}(f_{n-1}) \\
    \cdots\cdots\cdots\cdots\cdots\cdots \cdots \\
    D_{n-1}(f_1)\, \cdots \,D_{n-1}(f_{n-1}) \\
  \end{array}
\right) D_i.\end{equation} for any $f_1,\cdots, f_{n-1} \in
\mathbb{F}[[x_1,\cdots,x_{n-1}]], \, D_j=\frac{\partial}{\partial
x_j}$ and the hat means that the row $i$ does no appear in the
matrix.
\
\noindent Consider  the subset
$$A=\{D=\sum\limits_{i=1}^{n-1}f_i
D_i : \, f_i \in \mathbb{F}[x_1, \cdots,
x_{n-1}]\}.$$ It is dense in
$W_{n-1}$.
Since we are classifying continuous representations  it is enough to characterize a set of generator of $Q_A(W^n):=Q(W^n)\bigcap A$.
Take $f_1, \cdots f_{2n-2} \in \mathbb{F}[x_1, \cdots,x_{n-1}],$
where $f_l=X^{I_{l}}$ with
$$X^{I_{l}}:=x_1^{i_1^l}x_2^{i_2^l}\cdots
x_{n-1}^{i_{n-1}^l},\,$$
where $ I_l:=(i_1^l,\cdots,i_{n-1}^l)$ for any
$i_1^l,\cdots,i_{n-1}^l \in \mathbb{Z}_{\geq 0}$ and $\, l \in
\{1\cdots,2n-2\}.$ Then the generators of $Q_A(W^n)$ are given by

\begin{equation}\label{eq16}\,x_{f_1,\cdots,f_{2n-2}}=
\left(\sum\limits_{k=1}^{n-1}\alpha(k)\,
D_k\right)-\sum_{i=1}^{n}(-1)^{i+n}\left(\sum\limits_{q=1}^{n-1}
\beta(i,q)\,D_q\right)\left(\sum\limits_{s=1}^{n-1}\gamma(i,s)\,D_s\right),
\end{equation}

\noindent where

 \medskip $\alpha(k)=(-1)^{n+1+k}\displaystyle{\frac{f_1\cdots
f_{2n-2}}{x_1^2\cdots x_k \cdots x_{n-1}^2}\; \hbox{ det}A\hbox{
det}B_{k+1},\quad k=1\cdots,n-1,}\\$

\

  \medskip$\beta(i,q)=(-1)^{n+1+q}\displaystyle{\frac{f_1\cdots\hat{f_i}\cdots f_{n-1}}{x_1\cdots
\hat{x_q} \cdots x_{n-1}}\hbox{  det}A_{q+1, i},\quad
q=1\cdots,n-1,}\\$ \,

\

  $\gamma(i,s)=(-1)^{n+1+s}\displaystyle{\frac{f_{i}\cdots
\cdots f_{2n-2}}{x_1\cdots \hat{x_s} \cdots x_{n-1}}\,
\hbox{ det} C^{(i)}_{s+1}, \quad s=1\cdots, n-1,}$

\

\noindent with $i=1,\cdots,n$ and the matrices $A$, $B$ and $C\,$'s are defined as follows:

\
 $$A=\left(
    \begin{array}{cccc}
     {1}&\cdots  & 1\\

        i_{1}^1&\cdots  & i_{n-1}^1\\

      \cdots&\cdots&\cdots\\
      \cdots&\cdots&\cdots\\
        i_{1}^n&\cdots & i_{n-1}^n\\ \\
    \end{array}
  \right),$$
 $A_{q+1, i}$ is the matrix $A$ with the $q+1$-row and the $i$-column removed,
%
 \bigskip $$B_{k+1}=\left(
    \begin{array}{cccccc}
      1&\cdots &1&\cdots&1\\
      \sum_{r=1}^{n-1}(i_1^r-1)&\cdots &i_1^{n+1}&\cdots&i_1^{2n-2}\\
      \cdots&\cdots&\cdots&\cdots\\
      \widehat{\sum_{r=1}^{n-1}(i_k^r-1)}&\cdots &\widehat{i_k^{n+1}}&\cdots&\widehat{i_{k}^{2n-2}}\\
      \cdots&\cdots&\cdots&\cdots\\
       \sum_{r=1}^{n-1}(i_{n-1}^r-1)&\cdots &i_{n-1}^{n+1}&\cdots &i_{n-1}^{2n-2}\\
    \end{array}
  \right)$$

and
   \bigskip $$C_{s+1}^{(i)}=\left(
    \begin{array}{cccccc}
      1&1&\cdots&1\\
      i_1^{i}&i_1^{n+1}&\cdots&i_1^{2n-2}\\
      \cdots&\cdots&\cdots&\cdots\\
      \widehat{i_{s}^{i}}&\widehat{i_s^{n+1}}&\cdots&\widehat{i_{s}^{2n-2}}\\
      \cdots&\cdots&\cdots&\cdots\\
       i_{n-1}^i&i_{n-1}^{n+1}&\cdots &i_{n-1}^{2n-2}\\
    \end{array}
  \right)$$


\noindent where the hats mean that the corresponding row is removed.

\section{Main theorems and their proofs}
In this section we will state the main result of this paper. Recall that the inner derivations of the simple linearly compact $n$-Lie algebra $W^{n}$ are isomorphic to $W_{n-1}$ and denote by $\frak h\, $
 the Cartan subalgebra of the Lie algebra $\frak{gl}_{n-1}(\mathbb{F})$ chosen  above Theorem \ref{th6}.
Let $F$  be a finite dimensional irreducible highest weight
$\frak{gl}_{n-1}(\mathbb{F})$-module, with highest weight $\lambda \in \frak h^* $ and
highest weight vector $v_{\lambda.}$
Recall that our goal is to determine for which $\lambda \in \frak h^{\ast},$ the two sided ideal $Q(W^n)$ acts trivially on the irreducible highest weight module $J(F)=M(F)/\hbox{ Sing}_+(M(F)),$ This will ensure us that $J(F)$ is an $n$-Lie module of $W^n.$ Let's denote by $\lambda_i=\lambda(E_{i, i})$  for $ i=1, \cdots ,n-1$
\noindent and introduce the following useful notation  for the proof of the theorem,
\begin{equation}\label{eq19}\delta_{i,j}=
\begin{cases} 1 \;&\hbox{if}\; i\geq j\\
0  &\;\hbox{otherwise},
\end{cases}
\end{equation}
with $i, j \in\{1,\cdots, n-1\}.$

\begin{theorem} \begin{itemize}\item [(1)]Let $n=3.$\begin{itemize}\item [(a)] If $F$ is a highest weight irreducible finite dimensional $\frak{gl}_{2}(\mathbb{F})$-module, with highest weigh $\lambda \in \frak h^* $ which coincides with none of the exceptional modules $F^p$ with $p=1,2,$ then the irreducible continuous $W_2$-module $M(F)$ is an irreducible continuous representation of the simple linearly compact $3$-Lie algebra $W^3$ if and only if $\lambda \in \frak h^{\ast}$ is such that $\lambda_1=\lambda_2,\,\lambda_i\neq -1,\, i=1,2$ or $\lambda_1=-1-\lambda_2,\,\lambda_2\neq 0.$
\item[(b)]If $F$  coincides with some of the exceptional $\frak{gl}_2(\mathbb{F})$-modules, then the irreducible continuous representation $J(F)$ of $W_{2}$ is an irreducible continuous representation of the simple linearly compact $n$-Lie algebra $W^3.$ 
\end{itemize}

\item [(2)] Let $n\geq 4.$\begin{itemize}\item[(a)]If $F$ is a highest weight irreducible finite dimensional $\frak{gl}_{n-1}(\mathbb{F})$-module, with highest weigh $\lambda \in \frak h^* $ which coincides with none of the exceptional modules $F^1,\cdots F^{n-1},$ then the irreducible continuous representation $M(F)$ of $W_{n-1}$ is an irreducible continuous representation of the simple linearly compact $n$-Lie algebra $W^n$ if and only if $\lambda \in \frak h^{\ast}$ is such that $\lambda_1=\lambda_2=\cdots=\lambda_{n-1}$ with  $ \lambda_i\neq -1$ for  $ i=1, \cdots n-1.$
\item[(b)]If $F$ coincides with some of the exceptional $\frak{gl}_{n-1}$-modules,
    then the irreducible  continuous representation $J(F)$ of $W_{n-1}$ is an irreducible continuous representation of the simple linearly compact $n$-Lie algebra $W^n$ if and only if $\lambda \in \frak h^{\ast}$ is such that $\lambda_1=\lambda_2=\cdots=\lambda_{n-1}= -1.$
\end{itemize}
\end{itemize}
\end{theorem}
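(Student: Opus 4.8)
The plan is to reduce the whole statement, via Theorem \ref{th4} and Lemma \ref{lem4}, to understanding how the explicit generators $x_{f_1,\dots,f_{2n-2}}$ of \eqref{eq16} act on the highest weight vector $v_\lambda$, and then to read off the admissible weights. First I would grade $M(F)\cong U((W_{n-1})_-)\otimes F$ so that $F$ sits in degree $0$ and $(W_{n-1})_-$ lowers the degree. A short count using \eqref{iso2} shows that each $x_{f_1,\dots,f_{2n-2}}$ is a weight vector, of $\mathfrak h^*$-weight $w=(\sum_l I_l)-(2,\dots,2)$ and homogeneous of degree $d=|w|=\sum_{l=1}^{2n-2}|I_l|-(2n-2)$, with the two summands of \eqref{eq16} sharing this same weight and degree. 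Hence every generator with $d>0$ annihilates $v_\lambda$ for trivial reasons, its image lying in a positive degree absent from $M(F)$, and by Lemma \ref{lem4} it remains to treat $d\le0$. Since $\hbox{Sing}_+(M(F))\subset F_+$ lives in strictly negative degrees, a degree-$0$ generator must actually kill $v_\lambda$, while a negative-degree one must carry $v_\lambda$ into $\hbox{Sing}_+(M(F))$, which is $0$ exactly when $F$ is not exceptional (Theorem \ref{th6}).

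For necessity I would exploit the degree-$0$ generators, i.e. those with $\sum_l|I_l|=2n-2$. If such a generator has weight $w=0$ it sends $v_\lambda$ to a scalar multiple of itself, the scalar being polynomial in the $\lambda_i=\lambda(E_{ii})$ computed through the $\mathfrak{gl}_{n-1}$-action of Remark \ref{rmk3}; if $w\ne0$ it moves $v_\lambda$ to the weight space $\lambda+w$, and its vanishing couples distinct $\lambda_i$. Carrying this out forces, for $n\ge4$, all the $\lambda_i$ to coincide, i.e. $F$ to be a one-dimensional determinant-power module. For $n=3$ the $2\times2$ minors $\det A$, $\det B$, $\det C$ degenerate and the governing scalar factors, vanishing on two branches and yielding $\lambda_1=\lambda_2$ together with the reflected branch $\lambda_1=-1-\lambda_2$; this low-rank factorization is exactly why $n=3$ must be stated apart. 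The strict inequalities ($\lambda_i\ne-1$ for $n\ge4$, and $\lambda_i\ne-1$ or $\lambda_2\ne0$ for $n=3$) then record which resulting weights label exceptional modules $F^p$: those are excised from part (a) and instead handled as quotients $J(F^p)$ in part (b).

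For sufficiency I would assume the branch conditions and show that every generator sends $v_\lambda$ into $\hbox{Sing}_+(M(F))$. In the non-exceptional cases $F$ is one-dimensional, so $v_\lambda$ spans $F$ and the action of each derivation $gD_k$ on $v_\lambda$ is governed purely by the density weight $c=\lambda_1$; here the two summands of \eqref{eq16} are matched by a Laplace/cofactor expansion relating $\det A$ to the minors $\det A_{q+1,i}$ occurring in the second summand, which is the determinantal shadow of the Filippov--Jacobi identity \eqref{FJ}, so the whole expression collapses to $0$. I expect this to be the main obstacle: one must verify this determinantal cancellation uniformly in all $2n-2$ monomial parameters and across all negative degrees $d<0$, while tracking the weight-dependent scalars that appear whenever some factor $gD_k$ happens to have degree $0$ and thus acts on $v_\lambda$ through the $\mathfrak{gl}_{n-1}$-character rather than by creating a new tensor factor.

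Finally, for the exceptional weights I would replace the target $0$ by the nonzero submodule $\hbox{Sing}_+(M(F^p))$ and match $x_{f_1,\dots,f_{2n-2}}\,v_\lambda$ against Rudakov's explicit non-trivial singular vectors from Theorem \ref{th6} and Corollary \ref{corol3}. For $n\ge4$ only the constant weight $\lambda=(-1,\dots,-1)$ survives, since the degree-$0$ weight-shift argument of the necessity step already eliminates the other exceptional modules $F^1,\dots,F^{n-2}$ (their images sit in degree $0$ and cannot lie in $\hbox{Sing}_+$), giving part (b) for $n\ge4$. For $n=3$ the same reflected factorization lets both exceptional $\mathfrak{gl}_2$-modules pass the test, so part (b) is unconditional there. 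In every case the inclusion into $\hbox{Sing}_+(M(F^p))$ descends to the quotient $J(F^p)=N^p/\hbox{Sing}_+(M(F^p))$ by Lemma \ref{lem4}, which completes the proof.
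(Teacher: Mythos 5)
Your skeleton is essentially the paper's: the reduction through Theorem \ref{th4} and Lemma \ref{lem4}, and your grading observation (each generator of (\ref{eq16}) is homogeneous; positive degree kills $v_\lambda$ for free; degree zero must annihilate $v_\lambda$ because $\mathrm{Sing}_+(M(F))\subset F_+$ lives in negative degrees; negative degree must land in $\mathrm{Sing}_+$) is a clean reformulation of the paper's enumeration into Cases 1, 2, 3 according to $\deg(f_1\cdots f_{2n-2})=2n-2,\,2n-3,\,2n-4$. However, there are two genuine gaps. The first is in your sufficiency step: you assert that ``in the non-exceptional cases $F$ is one-dimensional,'' and you make the whole verification rest on that. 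This is false precisely on the branch that makes $n=3$ special. Take $\lambda=(1,-2)$: it is dominant, satisfies $\lambda_1=-1-\lambda_2$ with $\lambda_2\neq 0$, is not $F^1=(0,-1)$ or $F^2=(-1,-1)$, and $\dim F=\lambda_1-\lambda_2+1=4$, so $E_{2,1}v_\lambda\neq 0$. Here your mechanism (action through a determinant-power character plus a determinantal cancellation) simply does not apply. The actual reason this branch survives is visible only after the explicit computations: for $n=3$ the obstructions reduce to (\ref{eq55}) and (\ref{eq56}), and both carry the scalar factor $(\lambda_1+\lambda_2+1)$, which vanishes identically on the branch --- the vanishing comes from the coefficient, not from the vector $E_{2,1}v_\lambda$. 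Note also that your proposed ``determinantal shadow of Filippov--Jacobi'' cancellation cannot be an identity in $U(W_{n-1})$: if it were, $Q(W^n)$ would act trivially on every module and the theorem would impose no condition on $\lambda$ at all.

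The second gap concerns the exceptional modules for $n\geq 4$. You eliminate $F^1,\dots,F^{n-2}$ by saying their images under degree-zero generators ``sit in degree $0$ and cannot lie in $\mathrm{Sing}_+$,'' but that argument is vacuous if those images are zero, and the purely scalar (weight-zero) equations --- (\ref{eq22}), (\ref{eq39}), (\ref{eq42}) in the paper --- do \emph{not} rule out the branch $\lambda=(0,\dots,0,-1)$: they only force $\lambda_1=\cdots=\lambda_{n-1}$ \emph{or} $\lambda_1=\cdots=\lambda_{n-2}=0,\ \lambda_{n-1}=-1$. To kill the second branch one must exhibit a weight-shifting generator whose image, of the form $1\otimes E_{k,j}v_\lambda$, is \emph{nonzero} in $F^1$; this is a weight-multiplicity statement about that particular module. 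The paper does it with Freudenthal's formula: for $\lambda=(0,\dots,0,-1)$ the multiplicity of $\lambda-\alpha_{n-3}-\alpha_{n-2}$ equals one, hence $E_{n-1,n-3}v_\lambda\neq 0$ and (\ref{eq27}) is nonzero. Your proposal contains no multiplicity argument of this kind, and more generally your step ``its vanishing couples distinct $\lambda_i$'' needs the same tool, since conditions of the form $E_{k,j}v_\lambda=0$ translate into conditions on $\lambda$ only through such computations (or, on the all-equal branch, through the trivial observation that $F$ is then genuinely one-dimensional). Without these two repairs, the necessity half of (2)(a), part (2)(b), and the sufficiency half of (1)(a) on the reflected branch remain unproven.
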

\begin{proof}
Let $F$  be a highest weight irreducible finite dimensional
$\frak{gl}_{n-1}(\mathbb{F})$-module, with highest weigh $\lambda \in \frak h^* $ and
highest weigh vector $v_{\lambda.}$ Recall that $\frak h:=\displaystyle{\oplus_{i=1}^{n-1}\mathbb{F}\,E_{i,i}}$ is the chosen Cartan subalgebra of the Lie algebra $\frak{gl}_{n-1}(\mathbb{F}).$ Here we are identifying the subalgebra $\frak h$ with the subalgebra of $W_{n-1}$ generated by the elements $x_i\frac{\partial}{\partial x_i}, \, i=1, \cdots, n-1.$ 
Consider F as a $(W_{n-1})_{\geq
0}$-module and take the induced module $M(F)=U(W_{n-1})\otimes_{U((W_{n-1})_{\geq 0})} F.$ We will use Lemma \ref{lem4} and the general look of the generators of $Q_A(W^n)$  to find out for which $\lambda$'s, $Q_A(W^n)$ acts trivially in $J(F)$.
 \, Let $w_{\lambda}=1\otimes_{U((W_{n-1})_{\geq 0})}v_{\lambda}=1\otimes v_{\lambda}.$

According to the description of the generators given in (\ref{eq16}) and taking into account that $(W_{n-1})_+$ acts by zero on $w_{\lambda},$ it is enough to consider the subset of generators $Q_A(W^n)$ and ask them to either act trivially $w_{\lambda}$ if $F$ is non-exceptional or $Q_A(W^n)\otimes v_{\lambda} \subseteq \hbox{Sing}(M(F))$ otherwise.  It is enough to consider $x_{f_1, \cdots, f_{2n-2}}$  with monomials $f_i\in \mathbb{F}[x_1,\cdots,x_{n-1}]$ as in (\ref{eq16}) such that,
\begin{itemize}
\item[(1)] $\hbox{deg}(f_1\cdots
f_{2n-2})=2n-2$ and there exist $i \in \{1, \cdots, n\}$ such that
\begin{itemize}
\item [(a)] $\hbox{deg}(f_if_{n+1}\cdots f_{2n-2})=n-2$ or
\item[(b)]$\hbox{deg}(f_if_{n+1}\cdots f_{2n-2})=n-1,$
\end{itemize}
\item[(2)] $\hbox{deg}(f_1\cdots
f_{2n-2})=2n-3$ and there exist $i \in \{1, \cdots, n\}$ such that
\begin{itemize}
\item [(a)] $\hbox{deg}(f_if_{n+1}\cdots f_{2n-2})=n-2$ or
\item[(b)]$\hbox{deg}(f_if_{n+1}\cdots f_{2n-2})=n-1,$
\end{itemize}
\item[(3)] $\hbox{deg}(f_1f_2\cdots
f_{2n-2})=2n-4,$ and there exist $i \in \{1, \cdots, n\}$ such that $\hbox{deg}(f_if_{n+1}\cdots f_{2n-2})=n-2$
\end{itemize}
since the remaining ones are either zero or  act trivially any way.
Here, we are assuming by simplicity that $i=n$. 
Let's analyze each possible case.

\

 \noindent \underline{Case 1}:

 \

 \noindent Here, $\hbox{deg}(f_1\cdots f_{2n-2})=2n-2.$ We have four possible expressions for $f_1f_2\cdots f_{2n-2}$ such that $x_{f_1,\cdots, f_{2n-2}}\neq 0.$ Namely, there exist $j,\, k \in \{1, \cdots, n-1\}$ $(j<k)$, such that
  \begin{equation}\label{case1.1}f_1\cdots f_{2n-2}=x_1^2\cdots x_j\cdots x_{k}^3\cdots x_{n-1}^2,\end{equation}
  \begin{equation}\label{case1.4}f_1\cdots f_{2n-2}=x_1^2\cdots \hat{x_j}^2\cdots x_{k}^4\cdots x_{n-1}^2,\end{equation} \begin{equation}\label{case1.2}f_1\cdots f_{2n-2}=x_1^2\cdots x_j^2\cdots x_{k}^2\cdots x_{n-1}^2,\end{equation}or for some $l<j<k \in \{1, \cdots, n-1\},$ \begin{equation}\label{case1.3}f_1\cdots f_{2n-2}=x_1^2\cdots x_l \cdots x_j\cdots x_k^4 \cdots x_{n-1}^2.\end{equation}

\

 \noindent \underline{Case 1 (a)}:

 \

  If $f_1\cdots f_{2n-2}=x_1^2\cdots x_j\cdots x_{k}^3\cdots x_{n-1}^2,$ since $\hbox{deg}(f_nf_{n+1}\cdots f_{2n-2})=n-2,$ its follows that $f_nf_{n+1}\cdots f_{2n-2}=x_1\cdots \hat{x_l}\cdots x_{n-1}$ for some $l\in \{1,\cdots, n-1\}$ and $f_1\cdots f_{n-1}=x_1\cdots x_{k}^2\cdots x_{n-1}$
or $f_1\cdots f_{n-1}=$ \linebreak $x_1\cdots \hat{x_j}\cdots x_{k}^3\cdots x_{n-1}$ for some $j<k\in \{1,\cdots, n-1\}$.

 Suppose $f_1\cdots f_{n-1}=x_1\cdots x_{k}^2\cdots x_{n-1}$. Then $l=j$ and $f_n\cdots f_{2n-2}=x_1\cdots \hat{x_j}\cdots x_{n-1}.$ Therefore, we can consider the monomials as follows.
\begin{itemize}\item[(i)] Let  $n\geq 3$ and $j, k \in \{1,\cdots n-1\}$ with $j<k.$
\begin{align*}
&\,f_{s}=x_s \qquad\quad s=1,\cdots n-1,\,\,  s\neq k,\nonumber\\
&\, f_k=x_k^2, \quad f_n=1,\nonumber\\
&\,f_{n+s}=x_s\qquad  s=1,\cdots,j-1,\nonumber\\
 &\,f_{n+s}=x_{s+1} \quad s=j,\cdots n-2.\nonumber\\
\end{align*}
\noindent Thus, using (\ref{eq16}) for these $f_i$'s, it follows that,
\begin{equation}\label{eq21} x_{f_1,\cdots,f_{2n-2}}\cdot (1\otimes v_{\lambda})=(-1)^{j+1}\, 2\, (\displaystyle{\sum_{s=1}^{n-1}}\lambda_s+1)(1\otimes E_{k,j} v_{\lambda}).\end{equation}
\item[(ii)]Let $n\geq 4$ and $l,j, k \in \{1,\cdots n-1\}.$ Here,  to set the monomials $f_{n+1},\cdots f_{2n-2}$,  we  assume that  $l<j,$ otherwise we can interchange those indexes in the definition of $f_{n+1},\cdots f_{2n-2}.$ Set,
\begin{align*}
&\,f_{s}=x_s \quad\qquad s=1,\cdots, j-1,\, s\neq l-\delta_{l,j}\hbox{ or } k-\delta_{k,j},\nonumber\\ &\,f_{l-\delta_{l,j}}=x_lx_j,\quad f_{k-\delta_{k,j}}=x_k^2,\nonumber\\
&\,f_{s}=x_{s+1} \qquad s=j,\cdots,n-2 ,\nonumber\\
&\, \quad f_{n-1}=1,\quad f_{n}=x_l,\nonumber\\
&\,f_{n+s}=x_{s}\qquad  s=1,\cdots,l-1,\nonumber\\
 &\,f_{n+s}=x_{s+1} \quad s=l,\cdots j-2,\nonumber\\
 &\,f_{n+s}=x_{s+2} \quad s=j-1,\cdots n-3,\quad  f_{2n-2}=1.\nonumber\\
\end{align*}
\noindent By (\ref{eq16}) we have, if $l<k<j.$
\begin{equation}\label{eq22} x_{f_1,\cdots,f_{2n-2}}\cdot( 1\otimes v_{\lambda})=(-1)^{n+l+k}2(\lambda_l-\lambda_k)(1\otimes v_{\lambda}).\end{equation}
\item[(iii)] Let $n\geq 5$ and $l,j, k,m \in \{1,\cdots n-1\}$ with $j<k.$ Here,  we set
\begin{align*}
&\,f_{s}=x_s \qquad s=1,\cdots, k-1,\quad s\neq m-\delta_{m,k} \hbox{ or } l-\delta_{l,k},\nonumber\\
&\, f_{m-\delta_{m,k}}=x_mx_k, \quad f_{l-\delta_{l,k}}=x_lx_k,\nonumber\\
&\,f_{s}=x_{s+1} \quad s=k,\cdots, n-2, \quad f_{n-1}=1, \quad f_{n}=x_l,\nonumber\\
\end{align*}
The  monomials $f_{n+1},\cdots,f_{2n-2}$ are the same as in the case above,
then by (\ref{eq16}), we get
,
\begin{equation}\label{eq23} x_{f_1,\cdots,f_{2n-2}}\cdot( 1\otimes v_{\lambda})=(-1)^{l+k+j}(\lambda_k-\lambda_l+1)(1\otimes E_{k,j} v_{\lambda}).\end{equation}
\item[(iv)] Let $n\geq 4$ and take $j=m$ in the definition of $f_1, \cdots, f_{n-1}$ and keep the  same definitions for $f_n,\cdots, f_{2n-2}$ we took in (iii).
By (\ref{eq16}) its follows:

If $l<j<k,$
\begin{equation}\label{eq24} x_{f_1,\cdots,f_{2n-2}}\cdot( 1\otimes v_{\lambda})=(-1)^{j+l+k+1}(\lambda_k-\lambda_j+1)(1\otimes E_{k,j} v_{\lambda});\end{equation}
if $j<l<k,$
\begin{equation}\label{eq25} x_{f_1,\cdots,f_{2n-2}}\cdot( 1\otimes v_{\lambda})=
(-1)^{j+l+k+1}((\lambda_k-\lambda_j+1)1 \otimes E_{k,j}v_{\lambda}-1\otimes E_{k,l}E_{l,j}v_{\lambda});
\end{equation}
and if $j<k<l,$
\begin{equation}\label{eq26} x_{f_1,\cdots,f_{2n-2}}\cdot (1\otimes v_{\lambda})=(-1)^{j+l+k+1}(\lambda_j-\lambda_k)(1\otimes E_{k,j} v_{\lambda}).\end{equation}
\item[(v)] Let $n\geq 5$ and $l,j, k,m \in \{1,\cdots n-1\}.$
\begin{align*}
&\,f_{s}=x_s \quad\qquad s=1,\cdots, m-1, \, s\neq k-\delta_{k,m} \hbox{ or } l-\delta_{l,m},\nonumber\\
&\, f_{k-\delta_{k,m}}=x_mx_k, \quad f_{l-\delta_{l,k}}=x_lx_k,\nonumber\\
&\,f_{s}=x_{s+1} \quad\quad s=m,\cdots, n-2 \quad f_{n-1}=1, \quad f_{n}=x_m,\nonumber\\
&\,f_{n+s}=x_{s}\quad\quad  s=1,\cdots,j-1,\nonumber\\
 &\,f_{n+s}=x_{s+1} \quad s=j,\cdots m-2,\nonumber\\
 &\,f_{n+s}=x_{s+2} \quad s=m-1,\cdots n-3,\quad  f_{2n-2}=1.\nonumber\\
\end{align*}
Again, if $l<j<k<m,$ $l<j<m<k$ or $j<m<k<l$,  by (\ref{eq16}) we have
\begin{equation}\label{eq27} x_{f_1,\cdots,f_{2n-2}}\cdot (1\otimes v_{\lambda})=(-1)^{j}(1\otimes E_{k,j}v_{\lambda}).\end{equation}
\item[(vi)] Let $n \geq 5$ and $l,j, k,m \in \{1,\cdots n-1\}.$
\begin{align*}
&\,f_{s}=x_s \qquad\qquad s=1,\cdots, m-1, \, s\neq k-\delta_{k,m}\hbox{ or } l-\delta_{l,m},\nonumber\\ &\,f_{l-\delta_{l,m}}=x_mx_l \quad f_{k-\delta_{k,m}}=x_k^2,\nonumber\\
&\,f_{s}=x_{s+1} \quad\qquad s=m,\cdots, n-2, \quad f_{n-1}=1, \quad f_{n}=x_k,\nonumber\\
&\,f_{n+s}=x_{s}\quad\qquad  s=1,\cdots,j-1,\nonumber\\
 &\,f_{n+s}=x_{s+1} \qquad s=j,\cdots k-2,\nonumber\\
 &\,f{n+s}=x_{s+2} \quad s=k-1,\cdots n-3,\quad  f_{2n-2}=1.\nonumber\\
\end{align*}
Thus for $l<j<m<k,$ $l<j<k<m$ or $j<k<m<l,$ we have,
\begin{equation}\label{eq28} x_{f_1,\cdots,f_{2n-2}}\cdot( 1\otimes v_{\lambda})=(-1)^{j+m+k}2(\lambda_m-\lambda_l)(1\otimes E_{k,j}v_{\lambda}).\end{equation}
\item[(vii)] Let $n\geq 6$ and $l,j, k,m,t \in \{1,\cdots n-1\}$ with $j<k.$
\begin{align*}
&\,f_{s}=x_s \quad\qquad s=1,\cdots, t-1,\, s\neq m-\delta_{m,t}\hbox{ or } l-\delta_{l,t},\nonumber\\ &\,f_{m-\delta_{m,t}}=x_mx_t, \quad f_{l-\delta_{l,t}}=x_lx_k,\nonumber\\
&\,f_{s}=x_{s+1} \quad\quad s=t,\cdots, n-2, \quad f_{n-1}=1, \quad f_{n}=x_m,\nonumber\\
&\,f_{n+s}=x_{s}\qquad  s=1,\cdots,m-1,\nonumber\\
&\,f_{n+s}=x_{s+1} \quad s=m,\cdots j-2,\nonumber\\
&\, f_{n+s}=x_{s+2} \quad s=j-1,\cdots n-3,\quad  f_{2n-2}=1.\nonumber\\
\end{align*}
\noindent Again, by (\ref{eq16}),
\begin{equation}\label{eq29} x_{f_1,\cdots,f_{2n-2}}\cdot( 1\otimes v_{\lambda})=(-1)^{j}(1\otimes E_{k,j}v_{\lambda}).\end{equation}
\end{itemize}

 Now if, $f_1\cdots f_n = x_1\cdots \hat{x_j}\cdots x_k^3\cdots x_{n-1}$ and $f_n\cdots f_{2n-2}=x_1\cdots \hat{x_l}\cdots x_{n-1},$ then $l=k.$ Therefore, we have the following possibilities.
\item[(viii)] Let $n\geq 4$ and $l,j, k \in \{1,\cdots n-1\}.$ Note that to define the monomials $f_{n+1},\cdots f_{2n-2}$ we are assuming that $j<k$. Otherwise, we can interchange those indexes in the definition of $f_{n+1},\cdots f_{2n-2}.$
\begin{align*}
&\,f_{s}=x_s \quad\qquad s=1,\cdots, j-1,\, s\neq l-\delta_{l,j}\hbox{ or } k-\delta_{k,j},\nonumber\\
 &\,f_{l-\delta_{l,j}}=x_lx_k, \quad f_{k-\delta_{k,j}}=x_k^2,\nonumber\\
&\,f_{s}=x_{s+1} \qquad s=j,\cdots,n-2,\quad f_{n-1}=1,\quad f_n=x_j,\nonumber\\
&\,f_{n+s}=x_{s}\qquad  s=1,\cdots,j-1,\nonumber\\
 &\,f_{n+s}=x_{s+1} \quad s=j,\cdots k-2,\nonumber\\
 &\,f_{n+s}=x_{s+2} \quad s=k-1,\cdots n-3,\quad  f_{2n-2}=1.\nonumber\\
\end{align*}
By (\ref{eq16}) we have:
\noindent If $j<l<k,$
\begin{equation}\label{eq34} x_{f_1,\cdots,f_{2n-2}}\cdot (1\otimes v_{\lambda})=(-1)^{j+1}2((\lambda_k-\lambda_l+2)(1\otimes E_{k,j}v_{\lambda})+1\otimes E_{l,j}E_{k,l}v_{\lambda});\end{equation}
if $j<k<l$,
\begin{equation}\label{eq35} x_{f_1,\cdots,f_{2n-2}}\cdot (1\otimes v_{\lambda})=(-1)^{j}2(\lambda_k-\lambda_l+2) (1\otimes E_{k,j} v_{\lambda});\end{equation}
if $l<j<k$,
\begin{equation}\label{eq36} x_{f_1,\cdots,f_{2n-2}}\cdot (1\otimes v_{\lambda})=(-1)^{j+1}2(\lambda_k-\lambda_l+1) (1\otimes E_{k,j} v_{\lambda}).\end{equation}
 Now consider (\ref{case1.3}), namely suppose $f_1\cdots f_{n-1}=x_1\cdots \hat{x_j}\cdots x_{k}^3\cdots x_{n-1}$ and $f_n\cdots f_{2n-2}= x_1\cdots \hat{x_l}\cdots x_{n-1}$.
\item[(ix)] Let $n\geq 4$ and $l,j, k \in \{1,\cdots n-1\}$ and set
\begin{align*}
&\,f_{s}=x_s \,\quad\qquad s=1,\cdots, j-1,\, s\neq k-\delta_{k,j}\hbox{ or } l-\delta_{l,j},\nonumber\\
&\,f_{l-\delta_{l,j}}=x_lx_k, \quad f_{k-\delta_{k,j}}=x_k^2,\nonumber\\
&\,f_{s}=x_{s+1} \,\qquad s=j,\cdots,n-2,\quad f_{n-1}=1,\quad f_n=x_k,\nonumber\\
&\,f_{n+s}=x_{s}\qquad  s=1,\cdots,l-1,\nonumber\\
 &\,f_{n+s}=x_{s+1} \quad s=l,\cdots k-2,\nonumber\\
 &\,f_{n+s}=x_{s+2} \quad s=k-1,\cdots n-3,\quad  f_{2n-2}=1.\nonumber\\
\end{align*}
Then by (\ref{eq16}) if $ l<j<k$ or $j<l<k$ we have.
\begin{equation}\label{eq30} x_{f_1,\cdots,f_{2n-2}}\cdot (1\otimes v_{\lambda})=(-1)^{l+1}(1\otimes 2E_{k,j}E_{k,l}v_{\lambda}).\end{equation}
\item[(x)] Let $n \geq 4$,  $l,j, k \in \{1,\cdots n-1\}$ and
\begin{align*}
&\,f_{s}=x_s \quad\qquad s=1,\cdots, j-1,\, s\neq k-\delta_{k,j},\quad f_{k-\delta_{k,j}}=x_k^2,\nonumber\\
&\,f_{s}=x_{s+1} \qquad s=j,\cdots,n-2,\quad f_{n-1}=1,\quad f_n=x_j,\nonumber\\
&\,f_{n+s}=x_{s}\qquad  s=1,\cdots,l-1,\nonumber\\
 &\,f_{n+s}=x_{s+1} \quad s=l,\cdots j-2,\nonumber\\
 &\,f_{n+s}=x_{s+2} \quad s=j-1,\cdots n-3,\quad  f_{2n-2}=1.\nonumber\\
\end{align*}
By (\ref{eq16}) we have:
If $j<l<k$ or $ l<j<k$,
\begin{equation}\label{eq31} x_{f_1,\cdots,f_{2n-2}}\cdot (1\otimes v_{\lambda})=(-1)^{j}2(D_j\otimes E_{k,l}v_{\lambda}-D_l\otimes E_{k,j}v_{\lambda});\end{equation}
if $l<k<j$,
\begin{equation}\label{eq32} x_{f_1,\cdots,f_{2n-2}}\cdot (1\otimes v_{\lambda})=(-1)^{l}2(D_j\otimes E_{k,l}v_{\lambda});\end{equation}
and if $j<k<l,$
\begin{equation}\label{eq33} x_{f_1,\cdots,f_{2n-2}}\cdot (1\otimes v_{\lambda})=(-1)^{l}2(D_l\otimes E_{k,j}v_{\lambda}).\end{equation}

If $l=j,$ we have  that $f_1\cdots f_{n-1}=x_1\cdots \hat{x_j}\cdots x_{k}^3\cdots x_{n-1}$ and $f_n\cdots f_{2n-2}=x_1\cdots \hat{x_j}\cdots x_{n-1}$ and (\ref{case1.4}) holds. Thus, we have the following cases.

\item[(xi)] Let $n\geq 4$ and $l,j, k \in \{1,\cdots n-1\}$ with $j<k.$ We consider the same $f_1, \cdots f_{n-1}$ of the previous example and $f_n, \cdots f_{2n-2}$ are defined as follows.
\begin{align*}
&\, f_n=x_l,\quad f_{n+s}=x_{s}\quad  s=1,\cdots,l-1,\nonumber\\
 &\,f_{n+s}=x_{s+1} \qquad s=l,\cdots j-2,\nonumber\\
 &\,f_{n+s}=x_{s+2} \quad s=j-1,\cdots n-3,\quad  f_{2n-2}=1.\nonumber\\
\end{align*}
Then by (\ref{eq16}),
\begin{equation}\label{eq37} x_{f_1,\cdots,f_{2n-2}}\cdot( 1\otimes v_{\lambda})=(-1)^{l+1}(\lambda_j-\lambda_l)(1\otimes E_{k,j} v_{\lambda}).\end{equation}
\item[(xii)] Let $n\geq 4$ and $l,j, k \in \{1,\cdots n-1\}$ with $j<k.$  We consider the same $f_1, \cdots f_{n-1}$ as in (xi) and $f_n, \cdots f_{2n-2}$ are defined as follows:
\begin{align*}
 &\, f_n=x_k,\quad f_{n+s}=x_{s} \quad s=1,\cdots j-1,\nonumber\\
 &\,f_{n+s}=x_{s+1} \quad s=j,\cdots k-2,\nonumber\\
 &\,f_{n+s}=x_{s+2} \quad s=k-1,\cdots n-3,\quad  f_{2n-2}=1.\nonumber\\
\end{align*}
Then by (\ref{eq16}) we have,
\begin{equation}\label{eq38} x_{f_1,\cdots,f_{2n-2}}\cdot (1\otimes v_{\lambda})=(-1)^{j+1}2(1\otimes E_{k,j}E_{k,j}v_{\lambda}).\end{equation}
Now, consider (\ref{case1.2}), namely $f_1\cdots f_{2n-2}=x_1^2\cdots x_{n-1}^2$ with $\hbox{deg}(f_n\cdots f_{2n-2})=n-2.$ Then, there exist $k \in \{1, \cdots, n-1\}$ such that $f_{n}\cdots f_{2n-2}=x_1\cdots \hat{x_{k}}\cdots x_{n-1}$ and $f_1\cdots f_{n-1}=x_{1}\cdots x_k^2\cdots x_{n-1}.$
\item[(xiii)] Let $n\geq 3$ and $j, k \in \{1,\cdots n-1\}$ with $j<k.$
\begin{align*}
&\,f_{s}=x_s \quad\qquad s=1,\cdots n-1, \, s\neq j,\quad f_j=x_jx_k, \quad f_n=1,\nonumber\\
&\,f_{n+s}=x_{s}\qquad  s=1,\cdots,k-1,\nonumber\\
 &\,f_{n+s}=x_{s+1} \quad s=k,\cdots n-2.\nonumber\\
\end{align*}
Again, by (\ref{eq16}) we have,
\begin{equation}\label{eq39} x_{f_1,\cdots,f_{2n-2}}\cdot (1\otimes v_{\lambda})=(-1)^k(\lambda_j-\lambda_{k})(\displaystyle{\sum_{s=1}^{n-1}}\lambda_s+1)(1\otimes v_{\lambda}).\end{equation}
\item[(xiv)] Let $n\geq 4$ and $l,j, k \in \{1,\cdots n-1\}.$ Take the same definition of $f_1, \cdots f_{n-1}$ given in (xiii) and define the remaining polynomials as follows,
\begin{align*}
&\,f_{n+s}=x_{s}\quad\quad  s=1,\cdots,j-1,\, s\neq n+l-\delta_{l,j}-\delta_{l,k}\nonumber\\ &\,f_{n+l-\delta_{l,j}-\delta_{l,k}}=x_lx_j,\nonumber\\
 &\,f_{n+s}=x_{s+1} \quad s=j,\cdots k-2,\nonumber\\
 &\,f_{n+s}=x_{s+2} \quad s=k-1,\cdots n-3,\quad  f_{2n-2}=1.\nonumber\\
\end{align*}
Then by (\ref{eq16}), it follows:

\noindent If $j<k<l,$
\begin{equation}\label{eq40} x_{f_1,\cdots,f_{2n-2}}\cdot (1\otimes v_{\lambda})=(-1)^{n+l+k}(\lambda_j-\lambda_k)(1+\lambda_l-\lambda_j)(1\otimes v_{\lambda});\end{equation}
if $l<j<k,$
\begin{equation}\label{eq41} x_{f_1,\cdots,f_{2n-2}}\cdot ( 1\otimes v_{\lambda})=(-1)^{n+l+k}(\lambda_l-\lambda_j)(1+\lambda_j-\lambda_k)(1\otimes v_{\lambda});\end{equation}
and if $k<l<j,$
\begin{equation}\label{eq42} x_{f_1,\cdots,f_{2n-2}}\cdot (1\otimes v_{\lambda})=(-1)^{n+l+k}(\lambda_j-\lambda_k)(1+\lambda_l-\lambda_j)(1\otimes v_{\lambda}).\end{equation}



\

\noindent \underline{Case 1(b):}

\

Equations (\ref{case1.4}) and (\ref{case1.3}) don't give us new equations. Thus, consider $f_1, \cdots f_{2n-2}$ such that (\ref{case1.1}) holds. If $f_{n}\cdots f_{2n-2}=x_1\cdots x_{n-1}$ then $f_{1}\cdots f_{n-1}=x_1\cdots \hat{x_j}\cdots x_{k}^2\cdots x_{n-1}$ for some $j, k \in \{1, \cdots, n-1\}.$

 Then we have.\begin{itemize}
\item[(i)]Let $n\geq 4$ and $l,j, k, m \in \{1,\cdots n-1\}.$
\begin{align*}
&\,f_{s}=x_s \quad\qquad s=1,\cdots j-1,\quad s\neq l-\delta_{l,j} \hbox{ and } f_{l-\delta_{l,j}}=x_lx_k, \nonumber\\
&\,f_{s}=x_{s+1} \qquad s=j,\cdots n-2,\quad f_{n-1}=1,\quad f_n=x_lx_m,\nonumber\\
&\,f_{n+s}=x_{s}\qquad  s=1,\cdots,l-1,\nonumber\\
 &\,f_{n+s}=x_{s+1} \quad s=l,\cdots m-2,\nonumber\\
 &\,f_{n+s}=x_{s+2} \quad s=m-1,\cdots n-3,\quad  f_{2n-2}=1.\nonumber\\
\end{align*}
Again, by (\ref{eq16}), it follows:

\noindent If $j<l<m<k,$ $j<m<k<l,$ $l<j<m<k$ or $j<m<l<k,$
\begin{equation}\label{eq44} x_{f_1,\cdots,f_{2n-2}}\cdot( 1\otimes v_{\lambda})=(-1)^{j+l+m}((1\otimes E_{k,m}E_{m,j}v_{\lambda})-(\lambda_l-\lambda_m)(1\otimes E_{k,j} v_{\lambda}));\end{equation}
if $j<k<l<m,$$j<k<m<l,$ $j<l<k<m$ or $l<j<k<m$
\begin{equation}\label{eq45} x_{f_1,\cdots,f_{2n-2}}\cdot( 1\otimes v_{\lambda})=
(-1)^{j+l+m}(1+\lambda_l-\lambda_m)(1\otimes E_{k,j}v_{\lambda});
\end{equation}
and if $l<m<j<k,$ $m<l<j<k,$ $m<j<k<l,$ $m<j<l<k,$
\begin{equation}\label{eq46} x_{f_1,\cdots,f_{2n-2}}\cdot (1\otimes v_{\lambda})=(-1)^{j+l+m+1}(\lambda_l-\lambda_m)(1\otimes E_{k,j} v_{\lambda}).\end{equation}

Suppose (\ref{case1.2}) holds, then $f_1\cdots f_{n-1}=x_1\cdots x_{n-1}=f_n\cdots f_{2n-2}.$ Therefore the polynomials $f_1\cdots, f_{2n-2}$ are defined as follows.

\item[(ii)] Let $n\geq 5$ and $l,j, k,m \in \{1,\cdots n-1\}.$
\begin{align*}
&\,f_{s}=x_s \quad\qquad s=1,\cdots, m-1,\, s\neq l-\delta_{l,m} \hbox{ and } f_{l-\delta_{l,m}}=x_mx_l,\nonumber\\
&\,f_{s}=x_{s+1} \qquad s=m,\cdots, n-2, \quad f_{n-1}=1, \quad f_{n}=x_jx_k,\nonumber\\
&\,f_{n+s}=x_{s}\qquad  s=1,\cdots,j-1,\nonumber\\
 &\,f_{n+s}=x_{s+1} \quad s=j,\cdots k-2,\nonumber\\
 &\,f_{n+s}=x_{s+2} \quad s=k-1,\cdots n-3,\quad  f_{2n-2}=1.\nonumber\\
\end{align*}
Then, by (\ref{eq16}),
\begin{equation}\label{eq44} x_{f_1,\cdots,f_{2n-2}}\cdot (1\otimes v_{\lambda})=(-1)^{m+j+k+1}(\lambda_m-\lambda_l)(\lambda_j-\lambda_k)(1\otimes v_{\lambda}).\end{equation}

\end{itemize}

\noindent \underline{Case 2(a):}

\

Here we have that $\hbox{deg}(f_1\cdots f_{2n-2})=2n-3$ and $\hbox{deg}(f_nf_{n+1}\cdots f_{2n-2})=n-2.$ We have two possible expressions for $f_1f_2\cdots f_{2n-2}$.
There exist $j \in \{1, \cdots, n-1\}$ such that \begin{equation}\label{case2.1}f_1\cdots f_{2n-2}=x_1^2\cdots x_j\cdots x_{n-1}^2\end{equation}  \begin{equation}\label{case2.2}f_1\cdots f_{2n-2}=x_1^2\cdots x_j\cdots x_l\cdots x_{k}^3\cdots x_{n-1}^2\end{equation} for some $l, j, k \in\{1,\cdots, n-1\}.$

  If $f_1\cdots f_{2n-2}=x_1^2\cdots x_j\cdots x_{n-1}^2,$ since $\hbox{deg}(f_nf_{n+1}\cdots f_{2n-2})=n-2,$ its follows that $f_nf_{n+1}\cdots f_{2n-2}=x_1\cdots \hat{x_j}\cdots x_{n-1}$ and $f_1\cdots f_{n-1}=x_1\cdots x_{n-1}.$ Then we have.

 \item[(i)] Let $n\geq 4$ and $l,j, k \in \{1,\cdots n-1\}.$ Note that to define the monomials $f_{n+1},\cdots f_{2n-2}$ we are assuming $l<j,$ otherwise we can interchange those indexes in the definition of $f_{n+1},\cdots f_{2n-2}.$
\begin{align*}
&\,f_{s}=x_s \quad\qquad s=1,\cdots, k-1,\quad s\neq l-\delta_{l,j},  \hbox{ and } f_{l-\delta_{l,j}}=x_lx_k,\nonumber\\
&\,f_{s}=x_{s+1} \qquad s=k,\cdots,n-2,\quad f_{n-1}=1,\quad f_n=x_l,\nonumber\\
&\,f_{n+s}=x_{s}\qquad  s=1,\cdots,l-1,\nonumber\\
 &\,f_{n+s}=x_{s+1} \quad s=l,\cdots j-2,\nonumber\\
 &\,f_{n+s}=x_{s+2} \quad s=j-1,\cdots n-3,\quad  f_{2n-2}=1.\nonumber\\
\end{align*}
Then, we have:

\noindent If $j<l<k$ or $ j<k<l,$
\begin{equation}\label{eq47} x_{f_1,\cdots,f_{2n-2}}\cdot (1\otimes v_{\lambda})=(-1)^{j+l+1+k}(D_l\otimes E_{l,j}v_{\lambda}-D_k\otimes E_{k,j}v_{\lambda}-(\lambda_k-\lambda_l)(D_j\otimes v_{\lambda}));\end{equation}
if $k<j<l,$
\begin{equation}\label{eq48} x_{f_1,\cdots,f_{2n-2}}\cdot (1\otimes v_{\lambda})=(-1)^{j+k+l+1}(D_l\otimes E_{l,j}v_{\lambda}-(\lambda_k-\lambda_l)(D_j \otimes v_{\lambda}))
;\end{equation}
if $l<j<k,$
\begin{equation}\label{eq49} x_{f_1,\cdots,f_{2n-2}}\cdot (1\otimes v_{\lambda})=(-1)^{j+l+k+1}(D_k\otimes E_{k,j}v_{\lambda}-(\lambda_k-\lambda_l)(D_j\otimes v_{\lambda}));\end{equation}
 and if $l<k<j$ or $k<l<j,$
\begin{equation}\label{eq50} x_{f_1,\cdots,f_{2n-2}}\cdot (1\otimes v_{\lambda})=(-1)^{j+l+k+1}(\lambda_k-\lambda_l) (D_j\otimes v_{\lambda}).\end{equation}
The equation (\ref{case2.2}) doesn't provide new information.
\

\noindent\underline{Case 2 (b) and 3:}

\

After doing the same analysis, these cases don't provide new equations.

\

Observe that  in all the equations (\ref{eq21}) to (\ref{eq46}), except for equations (\ref{eq31}) to (\ref{eq33}) and (\ref{eq46}) to (\ref{eq50}), their right hand side belongs to $1\otimes_{U((W_{n-1}))_{\geq 0}}F,$ therefore they are trivial singular vectors. Due Lemma \ref{lem4}, we need to insure that all the equations (\ref{eq21}) to (\ref{eq46}), except the  equations (\ref{eq31}) to (\ref{eq33}) and (\ref{eq46}) to (\ref{eq50}), are equal to zero.
Since different equations hold for $n=3$ and $n\geq 4$ ,  we will study theses cases separately.

If $n=3,$ equations (\ref{eq21}) and ($\ref{eq39}$) hold and they have to be zero. Thus,
\begin{equation}\label{eq55}(\lambda_1+\lambda_2+1)(1\otimes E_{2,1}v_{\lambda})=0\end{equation}
\begin{equation}\label{eq56}(\lambda_1-\lambda_2)(\lambda_1+\lambda_2+1)(1\otimes v_{\lambda})=0
\end{equation}
 Equation (\ref{eq56}) implies $\lambda_1=\lambda_2$ or $\lambda_2=-1-\lambda_1.$ Suppose $\lambda_2=-1-\lambda_1$  then the equation (\ref{eq55}) holds, hence $M(F)$ is a continuous representation of the $3$-Lie algebra $W^3.$ Due Theorem \ref{th6}, $M(F)$ will be irreducible if $\lambda_1\neq 0.$ Otherwise if $\lambda=(0,-1),$ then $F$ coincides with the exceptional module $F^1$ and we need to take the quotient of $M(F^1)$ by the submodule generated by all its non-trivial singular vectors to make the module irreducible.
In the other hand if $\lambda_1=\lambda_2,$ we will show that $E_{2,1}v_{\lambda}=0,$ using the Freudental's formula.

  Now, if $n\geq 4,$ equations (\ref{eq22}), (\ref{eq39}) and ($\ref{eq42}$) equate to zero implies that $\lambda_1=\lambda_2=\cdots=\lambda_{n-1}$ or $\lambda_1=\lambda_2=\cdots=\lambda_{n-2}=0$ and $\lambda_{n-1}=-1.$

 Therefore we will apply the Freudenthal's formula to calculate the dimensions of the weight spaces and check wether the remaining equations are satisfied.
  To give the root basis of $\frak{gl}_{n-1}(\mathbb{F})$  it is convenient to have consider another  basis for $\frak{gl}_{n-1}(\mathbb{F}).$ For do this we need to take into account that  $\frak{gl}_{n-1}(\mathbb{F})=\frak{sl}_{n-1}(\mathbb{F})\oplus \frak{s}_{n}(\mathbb{F}),$ where $\frak{sl}_{n-1}(\mathbb{F})$ are the traceless matrices and $\frak{s}_{n}(\mathbb{F})$ denote the subspace of scalar multiple of the identity. We define a basis of $\frak {gl}_{n-1}(\mathbb{F})$ as a basis of $\frak{sl}_{n-1}(\mathbb{F})$  and the identity matrix.

As usual let $E_{i,j}$ be the matrix with a $1$ in the $(i, j)$ position and $0$'s everywhere else, $D_{i,j} = E_{i,i}-E_{j,j}$ and $h_i=D_{i,i+1}.$ A basis for $\frak{sl}_{n-1}(\mathbb{F})$ is given by $h_1,\cdots, h_{n-2}.$ Let $\tilde{\frak h}$ be the subalgebra of diagonal traceless matrices which is a Cartan subalgebra of $\frak{sl}_{n-1}(\mathbb{F}).$ Let $\epsilon_j :\tilde{\frak h} \rightarrow \mathbb{F}$ be defined by $\epsilon_{j}\left(\sum\limits_{i=1}^{n-1}a_i E_{i,i}\right)=a_j.$
We define the set of root $$\phi=\{\epsilon_i-\epsilon_{j}\mid 1\leq i\neq j\leq n-1\}$$
and the $\epsilon_i-\epsilon_{j}$ root space is generated by $E_{i,j}$ and a basis for this set of root is given by  $$\Delta=\{\epsilon_1-\epsilon_{2}, \epsilon_2-\epsilon_{3},\cdots,\epsilon_{n-2}-\epsilon_{n-1} \}.$$
Let $\Lambda^+$ be the set of all dominant weights and $\delta = \frac{1}{2}\sum_{\alpha \succ 0}\alpha$.
If $\alpha_{i}:=\epsilon_i-\epsilon_{i+1},$ the fundamental dominant weights relatives to $\Delta$ of $\frak{sl}_{n-1}(\mathbb{F})$ are given by,

 \begin{eqnarray}\label{eq4}\pi
_i=&\frac{1}{n-1}[(n-1-i)\alpha_1+2(n-1-i)\alpha_2+\cdots (i-1)(n-1-i)\alpha_{i-1}\nonumber\\&+i(n-1-i)\alpha_i
+i(n-2-i)\alpha_{i+1}+\cdots+i\alpha_{n-2}]\end{eqnarray}\\
Therefore $\Lambda$ is a lattice with basis $(\pi_i,\,i=1, \cdots,n-2 ).$
Let $n\geq 3,$ $L=\frak{sl}_{n-1}(\mathbb{F}),\, \alpha_i=\epsilon_i-\epsilon_{i+1}$ and $\pi_i$ as (\ref{eq4}). Require $(\alpha_i, \alpha_i)=1,$ so that $(\alpha_i,\alpha_j)=-1/2$ if $\mid i-j\mid=1$ and $(\alpha_i,\alpha_j)=0$ if $\mid i-j\mid\geq 2.$ Rewriting $\lambda=(\lambda_1, \cdots, \lambda_{n-1})$ with $\lambda_1=\cdots=\lambda_{n-1}$ in this new basis we have that $\lambda:=(0, \cdots, 0).$ Since $(\lambda+\delta, \lambda+\delta)-(\mu+\delta,\mu+\delta)=0$ for $\mu=-\alpha_{k-1}$  with $k\in \{1, \cdots, n-1\}$ then Freudenthal's formula  gives that the  multiplicities for $\mu=-\alpha_{k-1}$ is equal to zero. Besides, it follows from Freudenthal´s formula too, that the multiplicities for  $\mu=-\sum_{k=j}^{i-1}\alpha_k$ are also equal to zero for all $i,j \in \{1, \cdots, n-1\}, \, i<j.$ Thus $E_{i, j}v_{\lambda}=0,$ for all $i, j \in {1,\cdots n-1}, i<j.$ In particular  all the equations from (\ref{eq21}) to (\ref{eq50}) are equal to zero and M(F) results a continuous representation of the $n$-Lie algebra $W^n$ with $n\geq 3.$ Due Theorem \ref{th6}, $M(F)$ will be irreducible if $\lambda_i\neq -1$ with $i=1, \cdots n-1.$ Otherwise, if $\lambda=(-1, \cdots, -1),$ then $F$ coincides with the exceptional module $F^{n-1}$ and we have to take the quotient of $M(F^{n-1})$ by the submodule generated by all its non-trivial singular vectors, to make the module irreducible.

Finally, if $\lambda=(0,0,\cdots,-1),$ rewriting it in the new basis we have that $\lambda=\pi_{n-2}.$ The Freudenthal's formula gives that the multiplicities for $\mu=-\alpha_{n-3}-\alpha_{n-2}$ are equal to one. This implies that $E_{n-1, n-3}v_{\lambda}\neq 0,$ therefore equation $(\ref{eq27})$ is non zero and the induce representation $M(F)$ is not a representation of the $n$-Lie algebra $W^n,$ finishing our proof.
\end{proof}

\end{document}